\theoremstyle{plain}
\newtheorem{theorem}{Theorem}[section]
\newtheorem{lemma}[theorem]{Lemma}
\newtheorem{proposition}[theorem]{Proposition}
\newtheorem{corollary}[theorem]{Corollary}
\theoremstyle{definition}
\numberwithin{equation}{section}
\DeclareMathOperator{\spec}{Spec}
\newcommand{\bnum}{\begin{enumerate}}
\newcommand{\enum}{\end{enumerate}}
\begin{document}

\title{Spectrum of commuting  graphs of some classes of finite groups }
\author{Jutirekha Dutta  and Rajat Kanti Nath\footnote{Corresponding author}}
\date{}
\maketitle
\begin{center}\small{ Department of Mathematical Sciences,\\ Tezpur
University,  Napaam-784028, Sonitpur, Assam, India.\\
Emails: jutirekhadutta@yahoo.com, rajatkantinath@yahoo.com}
\end{center}


\smallskip

\noindent {\small{\textbf{Abstract:}  In  this paper, we initiate the study of spectrum of the commuting graphs of finite non-abelian groups. We first compute the spectrum of this graph for several classes of finite groups, in particular AC-groups. We  show that  the  commuting graphs  of  finite non-abelian AC-groups are integral. We also  show that the  commuting graph of a finite non-abelian group $G$ is integral if $G$ is not isomorphic to the symmetric group of degree $4$ and the  commuting graph of $G$ is planar. Further it is shown that   the  commuting graph of $G$ is integral if the  commuting graph of $G$ is toroidal.


\smallskip
\noindent \small{\textbf{\textit{Key words:}} commuting graph, spectrum, integral graph, finite group.} 

\noindent \small{\textbf{\textit{2010 Mathematics Subject Classification:}} 20D99; 05C50, 15A18, 05C25.}

\section{Introduction} \label{S:intro}
Let $G$ be a finite group with centre $Z(G)$. The commuting graph of a non-abelian group $G$, denoted by $\Gamma_G$, is a simple undirected graph whose vertex set is $G\setminus Z(G)$, and two vertices $x$ and $y$ are adjacent if and only if $xy = yx$.   Various   aspects of commuting graphs of different finite groups can be found in  \cite{amr06,bbhR09,iJ07,iJ08,mP13,par13}.  In  this paper, we initiate the study of spectrum of commuting graphs of finite non-abelian groups.  Recall that the spectrum of a graph  ${\mathcal{G}}$ denoted by $\spec({\mathcal{G}})$ is the set $\{\lambda_1^{k_1}, \lambda_2^{k_2},$ $\dots, \lambda_n^{k_n}\}$, where $\lambda_1,  \lambda_2, \dots, \lambda_n$ are the eigenvalues of the adjacency matrix of ${\mathcal{G}}$ with multiplicities $k_1, k_2, \dots, k_n$ respectively. A graph ${\mathcal{G}}$ is called integral if $\spec({\mathcal{G}})$ contains only integers. It is well known that the complete graph $K_n$ on $n$ vertices is integral. Moreover, if ${\mathcal{G}}$ is the disjoint union of some complete graphs then also it is integral.     The notion of integral graph was introduced by  Harary and  Schwenk \cite{hS74} in the year 1974. A very impressive survey on integral graphs can be found in \cite{bCrS03}.


 



  We observe that the commuting graph of a non abelian finite AC-group is disjoint union of some complete graphs. Therefore, commuting graphs of such groups are integral.  
In general it is difficult to classify all  finite non-abelian groups whose commuting graphs are integral. As applications of our results together with some other known results, in  Section 3, we show that the  commuting graph of a finite non-abelian group $G$ is  integral if $G$ is not isomorphic to $S_4$, the symmetric group of degree $4$, and the commuting graph of $G$ is planar. We also show that the  commuting graph of a finite non-abelian group $G$ is  integral if the commuting graph of $G$   is toroidal. Recall that the genus of a graph is the smallest non-negative integer $n$ such that the graph can be embedded on the surface obtained by attaching $n$ handles to a sphere. A graph is said to be planar or toroidal if the genus of the graph is zero or one respectively. It is worth mentioning that  Afkhami et al. \cite{AF14} and Das et al. \cite{das13} have classified all finite non-abelian groups whose commuting graphs are planar or toroidal recently.
  

\section{Computing spectrum}


It is well known that the complete graph $K_n$ on $n$ vertices is integral and $\spec(K_n)$ is given by $\{(-1)^{n - 1}, (n - 1)^1\}$. Further, if $\mathcal{G} = K_{m_1}\sqcup K_{m_2}\sqcup\cdots  \sqcup K_{m_l}$, where $K_{m_i}$ are complete graphs on $m_i$ vertices for $1 \leq i \leq l$, then 
\begin{equation} \label{prethm1}
\spec(\mathcal{G}) = \{(-1)^{\underset{i = 1}{\overset{l}{\sum}}m_i - l},\, (m_1 - 1)^1,\, (m_2 - 1)^1,\, \dots,\, (m_l - 1)^1\}.
\end{equation}

\noindent If $m_1 = m_2 = \cdots = m_l = m$  then we write $\mathcal{G} = lK_{m}$ and in that case  $\spec(\mathcal{G}) = \{(-1)^{l(m-1)}, (m - 1)^l\}$.

In this section, we compute the spectrum of the commuting graphs of different families of finite non-abelian AC-groups.  A group $G$ is called an AC-group if $C_G(x)$ is abelian for all $x \in G\setminus Z(G)$.  Various aspects of AC-groups can be found in \cite{Ab06,das13,Roc75}.
The following lemma plays an important role in  computing  spectrum of commuting graphs of AC-groups. 

\begin{lemma}\label{AC-Lem}
Let $G$ be a finite non-abelian   AC-group.  Then    the  commuting graph of   $G$ is given by
\[
\Gamma_G =  \overset{n}{\underset{i = 1}{\sqcup}}K_{|X_i| - |Z(G)|}
\]
where $X_1,\dots, X_n$ are the distinct centralizers of non-central elements of $G$.
\end{lemma}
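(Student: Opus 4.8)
The plan is to establish the three facts that together yield the stated decomposition: that the sets $X_i \setminus Z(G)$ partition the vertex set $G \setminus Z(G)$, that each of them spans a complete subgraph, and that no edge joins two distinct such sets. Since $Z(G) \subseteq X_i$ for every $i$, the set $X_i \setminus Z(G)$ has exactly $|X_i| - |Z(G)|$ vertices, so these three facts immediately give $\Gamma_G = \bigsqcup_{i=1}^n K_{|X_i| - |Z(G)|}$.

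The first thing I would prove, and the step I expect to carry the whole argument, is the centralizer identity: if $x$ is non-central and $y$ is a non-central element of $C_G(x)$, then $C_G(x) = C_G(y)$. The proof is short but is the real content, and it is the only place the AC-hypothesis enters: since $G$ is an AC-group and $x \notin Z(G)$, the centralizer $C_G(x)$ is abelian, so each of its elements commutes with $y \in C_G(x)$, giving $C_G(x) \subseteq C_G(y)$; the symmetric argument, using that $C_G(y)$ is abelian (as $y \notin Z(G)$) and contains $x$, gives $C_G(y) \subseteq C_G(x)$. Everything after this identity is bookkeeping, so I expect no serious obstacle beyond getting this symmetric inclusion clean.

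With the identity in hand, the partition follows quickly. Every non-central $x$ lies in $C_G(x)$, which is one of the $X_i$, so the sets $X_i \setminus Z(G)$ cover $G \setminus Z(G)$. For disjointness I would take a non-central $z \in (X_i \setminus Z(G)) \cap (X_j \setminus Z(G))$, write $X_i = C_G(a)$ and $X_j = C_G(b)$, and apply the identity twice (to the pairs $a,z$ and $b,z$) to get $X_i = C_G(z) = X_j$. For the graph structure, two distinct vertices inside one class both lie in the abelian group $X_i$, hence commute and are adjacent, so each class spans a complete graph; and if $u \in X_i \setminus Z(G)$ and $v \in X_j \setminus Z(G)$ were adjacent, then $v \in C_G(u) = X_i$ by the identity, so disjointness forces $i = j$, ruling out cross-edges. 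Assembling these observations completes the proof.
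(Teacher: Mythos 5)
Your proof is correct and follows essentially the same route as the paper: show that the distinct centralizers $X_i$ of non-central elements pairwise intersect exactly in $Z(G)$, that each $X_i\setminus Z(G)$ spans a clique, and that there are no cross-edges. The only difference is that the paper outsources the key centralizer identity ($C_G(x)=C_G(y)$ for non-central $y\in C_G(x)$) to Lemma 3.6 of Abdollahi, Akbari and Maimani, whereas you derive it directly from the AC-hypothesis via the symmetric inclusion, which makes your argument self-contained.
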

\begin{proof}
Let $G$ be a finite non-abelian   AC-group and $X_1,\dots, X_n$ be the distinct centralizers of non-central elements of $G$. Let $X_i = C_G(x_i)$ where $x_i \in G \setminus Z(G)$ and $1\leq i \leq n$. Let $x, y \in X_i \setminus Z(G)$  for some $i$ and $x \ne y$  then, since $G$ an 
AC-group, there is an edge between $x$ and $y$ in the commuting graph of $G$.  Suppose that $x \in (X_i\cap X_j)\setminus Z(G)$ for some $1\leq i \ne j \leq n$. Then  $[x, x_i] = 1$ and $[x, x_j] = 1$.  Hence, by Lemma 3.6 of \cite{Ab06} we have $C_G(x) = C_G(x_i) = C_G(x_j)$, a contradiction. Therefore, $X_i\cap X_j = Z(G)$ for any $1\leq i \ne j \leq n$. This shows that  $\Gamma_G = \overset{n}{\underset{i = 1}{\sqcup}}K_{|X_i|-|Z(G)|}$.
\end{proof}

\begin{theorem}\label{AC-group}
Let $G$ be a finite non-abelian   AC-group.  Then    the spectrum of the commuting graph of   $G$ is given by
\[
 \{(-1)^{\overset{n}{\underset{i = 1}{\sum}}|X_i| - n(|Z(G)| + 1) }, (|X_1| - |Z(G)| - 1)^1,\dots, (|X_n| - |Z(G)| - 1)^1\} 
\]
where $X_1,\dots, X_n$ are the distinct centralizers of non-central elements of $G$.
\end{theorem}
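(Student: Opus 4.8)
The plan is to combine the structural description of the commuting graph given by Lemma~\ref{AC-Lem} with the spectral formula for disjoint unions of complete graphs recorded in~\eqref{prethm1}. Lemma~\ref{AC-Lem} already tells us that $\Gamma_G = \sqcup_{i=1}^{n} K_{|X_i| - |Z(G)|}$, so the theorem is essentially a direct substitution into the known formula; the only real work is matching up the parameters correctly.

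First I would set $m_i := |X_i| - |Z(G)|$ for $1 \leq i \leq n$, so that $\Gamma_G = K_{m_1} \sqcup K_{m_2} \sqcup \cdots \sqcup K_{m_n}$ is a disjoint union of $n$ complete graphs in exactly the form to which~\eqref{prethm1} applies (with $l = n$). Applying~\eqref{prethm1} then gives immediately that the distinct ``large'' eigenvalues are $m_i - 1 = |X_i| - |Z(G)| - 1$ for each $i$, each with multiplicity one, which matches the displayed list $(|X_1| - |Z(G)| - 1)^1, \dots, (|X_n| - |Z(G)| - 1)^1$.

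Next I would compute the multiplicity of the eigenvalue $-1$. By~\eqref{prethm1} this multiplicity is $\sum_{i=1}^{n} m_i - n$, and substituting $m_i = |X_i| - |Z(G)|$ yields
\[
\sum_{i=1}^{n} \bigl(|X_i| - |Z(G)|\bigr) - n = \sum_{i=1}^{n} |X_i| - n|Z(G)| - n = \sum_{i=1}^{n} |X_i| - n(|Z(G)| + 1),
\]
which is precisely the exponent of $-1$ in the asserted spectrum. This completes the identification of every eigenvalue with its multiplicity.

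I do not anticipate a genuine obstacle here, since the statement is a clean corollary of Lemma~\ref{AC-Lem} and~\eqref{prethm1}; the proof is really just bookkeeping. The only point warranting a word of care is whether the ``large'' eigenvalues $|X_i| - |Z(G)| - 1$ are genuinely distinct across the blocks, or whether some of them could coincide or collapse into the eigenvalue $-1$ (which happens precisely when some $m_i = 1$, i.e.\ $|X_i| = |Z(G)| + 1$). The formula as displayed lists them blockwise with multiplicity one regardless of such coincidences, so I would simply follow the convention already adopted in~\eqref{prethm1} and list one eigenvalue $|X_i| - |Z(G)| - 1$ per distinct centralizer $X_i$, merging into the $-1$ entry any contributions that happen to equal $-1$; no separate argument is needed beyond citing Lemma~\ref{AC-Lem} and~\eqref{prethm1}.
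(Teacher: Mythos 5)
Your proposal is correct and is exactly the paper's argument: the paper proves this theorem by citing Lemma~\ref{AC-Lem} and~\eqref{prethm1}, and your substitution $m_i = |X_i| - |Z(G)|$ with the ensuing bookkeeping is precisely the computation the paper leaves implicit.
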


\begin{proof}
The proof follows from Lemma \ref{AC-Lem} and    \eqref{prethm1}.
\end{proof}



\begin{corollary}\label{AC-cor}
Let $G$ be a finite non-abelian  AC-group and $A$ be any finite abelian group.  Then    the spectrum of the commuting graph of   $G\times A$ is given by
\begin{align*}
 \{ (-1)^{\overset{n}{\underset{i = 1}{\sum}}|A|(|X_i| - n|Z(G)|) - n}, (|A|(|X_1| - |Z(G)|) - &1))^1,\dots,\\
&  (|A|(|X_n| - |Z(G)|) - 1))^1 \} 
\end{align*}
where $X_1,\dots, X_n$ are the distinct centralizers of non-central elements of $G$.  
\end{corollary}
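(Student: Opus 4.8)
The plan is to reduce the statement to a direct application of Theorem \ref{AC-group}, by first verifying that $G \times A$ is again a finite non-abelian AC-group and then describing its centre and the distinct centralizers of its non-central elements in terms of those of $G$.

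I would begin with the structural facts. Because $A$ is abelian, $Z(G \times A) = Z(G) \times A$, so an element $(x, a)$ is non-central in $G \times A$ exactly when $x \in G \setminus Z(G)$. For such $(x, a)$ one has $C_{G \times A}((x, a)) = C_G(x) \times C_A(a) = C_G(x) \times A$, since $C_A(a) = A$. As $G$ is an AC-group, $C_G(x)$ is abelian, whence $C_G(x) \times A$ is abelian; thus $G \times A$ is an AC-group, and it is non-abelian because $G$ is.

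Next I would identify the distinct centralizers. By the computation above, every centralizer of a non-central element of $G \times A$ equals $X_i \times A$ for some $X_i = C_G(x_i)$ among the distinct centralizers of non-central elements of $G$; conversely each $X_i \times A$ is realised as $C_{G \times A}((x_i, e))$ with $x_i \notin Z(G)$. Since $X_i \times A = X_j \times A$ forces $X_i = X_j$, there are exactly $n$ of them, namely $X_1 \times A, \dots, X_n \times A$.

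Finally I would substitute into Theorem \ref{AC-group} using $|X_i \times A| = |A|\,|X_i|$ and $|Z(G \times A)| = |A|\,|Z(G)|$. Each of the $n$ nontrivial eigenvalues becomes $|X_i \times A| - |Z(G \times A)| - 1 = |A|(|X_i| - |Z(G)|) - 1$, while $-1$ occurs with total multiplicity $\sum_{i=1}^n \bigl(|A|(|X_i| - |Z(G)|) - 1\bigr)$, yielding the stated exponent after collecting terms. I expect no genuine obstacle here: the entire content lies in the two structural claims — that $G \times A$ is an AC-group and that its non-central centralizers are precisely the $X_i \times A$ — and once these are in hand the spectrum is pure bookkeeping.
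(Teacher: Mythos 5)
Your proposal is correct and follows exactly the paper's route: identify $Z(G\times A)=Z(G)\times A$ and the distinct centralizers $X_i\times A$, conclude $G\times A$ is an AC-group, and apply Theorem \ref{AC-group}; you merely spell out the verifications that the paper states without proof. Your final bookkeeping, giving multiplicity $\sum_{i=1}^{n}\bigl(|A|(|X_i|-|Z(G)|)-1\bigr)$ for the eigenvalue $-1$, is the correct reading of the exponent.
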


\begin{proof}
It is easy to see that $Z(G\times A) = Z(G)\times A$ and $X_1\times A,  X_2 \times A,\dots, X_n \times A$  are the distinct centralizers of non-central elements of $G\times A$.  Therefore, if $G$ is an  AC-group then $G\times A$ is also an  AC-group. Hence, the result follows from Theorem \ref{AC-group}. 
\end{proof}



 Now we compute the spectrum of the  commuting graphs of some particular families of AC-groups. We begin with the well-known family of quasidihedral groups.
\begin{proposition}\label{semid}
The spectrum of the commuting graph of the quasidihedral group $QD_{2^n} = \langle a, b : a^{2^{n-1}} =  b^2 = 1, bab^{-1} = a^{2^{n - 2} - 1}\rangle$, where $n \geq 4$, is given by
\[
\spec(\Gamma_{QD_{2^n}}) = \{(-1)^{2^{n}  - 2^{n - 2} - 3}, 1^{2^{n - 2}}, (2^{n - 1} - 3)^1\}.
\]
\end{proposition}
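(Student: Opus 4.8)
The plan is to realize $QD_{2^n}$ as a finite non-abelian AC-group and then read the spectrum directly off Theorem \ref{AC-group}; essentially all of the work is a careful centralizer computation. First I would record the basic arithmetic. The group has order $2^n$, every element is uniquely $a^i$ or $a^i b$ with $0\le i<2^{n-1}$, and the defining relation yields the rewriting rule $b a^i = a^{i(2^{n-2}-1)}b$. Using this I would locate the centre: one checks that $b a^{2^{n-2}} b^{-1} = a^{(2^{n-2}-1)2^{n-2}} = a^{2^{n-2}}$, where the exponent reduces modulo $2^{n-1}$ because $2^{2n-4}\equiv 0$ for $n\ge 4$, while no other nontrivial power of $a$ is fixed by conjugation by $b$. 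Hence $Z(G)=\{1,a^{2^{n-2}}\}$ and $|Z(G)|=2$.

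Next I would determine all centralizers of non-central elements. For a non-central power $a^k$ (that is, $k\not\equiv 0 \pmod{2^{n-2}}$), imposing $(a^i b)a^k = a^k(a^i b)$ forces $k(2^{n-2}-2)\equiv 0 \pmod{2^{n-1}}$, i.e.\ $k\equiv 0 \pmod{2^{n-2}}$, contradicting non-centrality; so no reflection commutes with $a^k$ and therefore $C_G(a^k)=\langle a\rangle$, of order $2^{n-1}$. For a reflection $a^i b$, the same rewriting rule shows that the only powers of $a$ commuting with it lie in $Z(G)$, and that $a^j b$ commutes with $a^i b$ exactly when $j\equiv i \pmod{2^{n-2}}$; thus $C_G(a^i b)=\{1,\,a^{2^{n-2}},\,a^i b,\,a^{i+2^{n-2}}b\}=\langle a^i b,\,a^{2^{n-2}}\rangle$ has order $4$. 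Since $\langle a\rangle$ is cyclic and the remaining centralizers have order $4$, all are abelian, so $G$ is an AC-group. The distinct centralizers are $\langle a\rangle$ together with one order-$4$ centralizer for each residue class of $i$ modulo $2^{n-2}$, giving $2^{n-2}$ of the latter.

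Finally I would feed this data into Theorem \ref{AC-group}. There is a single centralizer of order $2^{n-1}$ and $2^{n-2}$ centralizers of order $4$, so Lemma \ref{AC-Lem} gives $\Gamma_{QD_{2^n}} = K_{2^{n-1}-2}\sqcup 2^{n-2}K_2$. By \eqref{prethm1} the positive eigenvalues are $2^{n-1}-3$ once (from the large block, since $|\langle a\rangle|-|Z(G)|-1 = 2^{n-1}-3$) and $1$ with multiplicity $2^{n-2}$ (from the copies of $K_2$, since $4-2-1=1$), while the multiplicity of $-1$ is $(2^{n-1}-3)+2^{n-2} = 3\cdot 2^{n-2}-3 = 2^n - 2^{n-2}-3$. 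This is precisely the claimed spectrum, and as a sanity check the three multiplicities sum to $2^n-2 = |G|-|Z(G)|$.

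The only delicate point is the centralizer bookkeeping. One must handle the exponent arithmetic modulo $2^{n-1}$ carefully and, crucially, observe that $a^i b$ and $a^{i+2^{n-2}}b$ generate the \emph{same} order-$4$ subgroup, so that the $2^{n-1}$ reflections collapse to exactly $2^{n-2}$ distinct centralizers rather than $2^{n-1}$. Getting this count correct is what makes the multiplicity of the eigenvalue $1$ equal to $2^{n-2}$ and forces the multiplicity of $-1$ to be $2^n-2^{n-2}-3$.
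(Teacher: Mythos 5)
Your proposal is correct and follows essentially the same route as the paper: identify $Z(QD_{2^n})=\{1,a^{2^{n-2}}\}$, show the centralizers of non-central elements are $\langle a\rangle$ and the $2^{n-2}$ order-$4$ subgroups $\{1,a^{2^{n-2}},a^ib,a^{i+2^{n-2}}b\}$, conclude via Lemma \ref{AC-Lem} that $\Gamma_{QD_{2^n}}=K_{2^{n-1}-2}\sqcup 2^{n-2}K_2$, and read off the spectrum from \eqref{prethm1}. The only difference is that you verify the centralizer structure by explicit exponent arithmetic where the paper simply cites it as well known, so your write-up is, if anything, more complete.
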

\begin{proof}
It is well-known that $Z(QD_{2^n}) = \{1, a^{2^{n - 2}}\}$. Also 
\[
C_{QD_{2^n}}(a) = C_{QD_{2^n}}(a^i) = \langle a \rangle \text{ for } 1 \leq i \leq 2^{n - 1} - 1, i \ne 2^{n - 2}
\]
and
\[
C_{QD_{2^n}}(a^jb) = \{1, a^{2^{n - 2}}, a^ib, a^{i + 2^{n - 2}}b \} \text{ for } 1 \leq j \leq 2^{n - 2}
\]
are the only  centralizers of non-central elements of $QD_{2^n}$. Note that these centralizers are abelian subgroups of  $QD_{2^n}$. Therefore, by Lemma \ref{AC-Lem}
\[
\Gamma_{QD_{2^n}} = K_{|C_{QD_{2^n}}(a)\setminus Z(QD_{2^n})|} \sqcup (\underset{j = 1}{\overset{2^{n - 2}}{\sqcup}} K_{|C_{QD_{2^n}}(a^jb)\setminus Z(QD_{2^n})|}).
\]
That is, $\Gamma_{QD_{2^n}} = K_{2^{n - 1} - 2} \sqcup 2^{n - 2} K_2$, since $|C_{QD_{2^n}}(a)| = 2^{n - 1}, |C_{QD_{2^n}}(a^jb)| = 4$ for $1 \leq j \leq 2^{n - 2}$ and  $|Z(QD_{2^n})| = 2$. Hence,  the result follows from  \eqref{prethm1}.
\end{proof}

\begin{proposition}\label{psl}
The spectrum of the commuting graph of the projective special linear group  $PSL(2, 2^k)$, where $k \geq 2$,   is given by
\[
 \{(-1)^{2^{3k} - 2^{2k} - 2^{k + 1} - 2}, (2^k - 1)^{2^{k - 1}(2^k - 1)}, (2^k - 2)^{2^k + 1}, (2^k - 3)^{2^{k - 1}(2^k + 1)}\}. 
\]
\end{proposition}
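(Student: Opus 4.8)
The plan is to invoke Lemma~\ref{AC-Lem} (equivalently Theorem~\ref{AC-group}), so that the entire problem reduces to identifying the distinct centralizers of non-central elements of $G = PSL(2, 2^k)$ together with their orders and multiplicities. Writing $q = 2^k$, recall that $G$ has trivial centre, so $|Z(G)| = 1$ and $|G| = q(q^2 - 1)$. The first thing I would record is that $G$ is a non-abelian AC-group; this is classical and reflects the fact that every non-identity element of $PSL(2, q)$ lies in a unique maximal abelian subgroup (cyclic or elementary abelian), which is precisely its centralizer.

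Next I would appeal to the classical subgroup structure of $PSL(2, q)$ (Dickson's classification) to list the three families of centralizers when $q = 2^k$: (i) the Sylow $2$-subgroups, elementary abelian of order $q$, which are exactly the centralizers of the involutions; (ii) the split tori, cyclic of order $q - 1$; and (iii) the nonsplit tori, cyclic of order $q + 1$. Counting these via the orders of their normalizers (a Borel subgroup of order $q(q-1)$ for (i), and dihedral groups of orders $2(q-1)$ and $2(q+1)$ for (ii) and (iii)) gives $q + 1$ centralizers of order $q$, exactly $\tfrac{q(q+1)}{2}$ of order $q - 1$, and $\tfrac{q(q-1)}{2}$ of order $q + 1$.

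With this data in hand, Lemma~\ref{AC-Lem} immediately yields
\[
\Gamma_G = (q+1)\,K_{q-1} \;\sqcup\; \tfrac{q(q+1)}{2}\,K_{q-2} \;\sqcup\; \tfrac{q(q-1)}{2}\,K_{q},
\]
since each centralizer $X$ contributes a complete graph on $|X| - |Z(G)| = |X| - 1$ vertices. Applying the spectrum formula \eqref{prethm1} then reads off the three non-trivial eigenvalues $q - 1$, $q - 2$ and $q - 3$ with multiplicities $\tfrac{q(q-1)}{2}$, $q + 1$ and $\tfrac{q(q+1)}{2}$ respectively, which on rewriting in terms of $2^k$ match the stated values. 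Finally I would pin down the multiplicity of $-1$ as the number of vertices minus the number of components, namely $(|G| - 1) - (q^2 + q + 1) = q^3 - q^2 - 2q - 2 = 2^{3k} - 2^{2k} - 2^{k+1} - 2$, completing the computation.

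The bookkeeping in the last paragraph is routine. The one place that genuinely needs care is the second step, since the precise orders and the exact counts of the three families of centralizers rest on the detailed subgroup structure of $PSL(2, 2^k)$ rather than on anything established in the present paper; thus the main task is to cite (or re-derive) Dickson's classification accurately and to confirm that these three families really do exhaust the centralizers of non-central elements, so that the AC-group hypothesis of Lemma~\ref{AC-Lem} applies.
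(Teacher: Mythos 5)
Your proposal is correct and follows essentially the same route as the paper: reduce to Lemma~\ref{AC-Lem}, identify the three conjugacy classes of centralizers (elementary abelian of order $2^k$, cyclic of order $2^k-1$, cyclic of order $2^k+1$) with the same counts, and read off the spectrum from \eqref{prethm1}. The only difference is sourcing: where you propose to derive the centralizer data from Dickson's classification via normalizer orders, the paper simply cites Proposition 3.21 of Abdollahi--Akbari--Maimani (reference \cite{Ab06}), which supplies exactly the classification and counts you flagged as the one step needing care.
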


\begin{proof}
We know that  $PSL(2, 2^k)$ is a non-abelian group of order $2^k(2^{2k} - 1)$ with trivial center. By Proposition 3.21 of \cite{Ab06}, the set of centralizers of non-trivial elements of $PSL(2, 2^k)$ is given by
\[
\{xPx^{-1}, xAx^{-1}, xBx^{-1} : x \in PSL(2, 2^k)\}
\]
where $P$ is an elementary abelian \quad $2$-subgroup and  $A, \quad B$ are  cyclic subgroups of  $PSL(2, 2^k)$ having order $2^k, 2^k - 1$ and $2^k + 1$ respectively. Also the number of conjugates of $P, A$ and $B$ in $PSL(2, 2^k)$ are  $2^k + 1, 2^{k - 1}(2^k + 1)$ and $2^{k - 1}(2^k - 1)$ respectively. Note that $PSL(2, 2^k)$ is a AC-group and so, by Lemma \ref{AC-Lem}, the commuting graph of $PSL(2, 2^k)$ is given by 
\[
 (2^k + 1)K_{|xPx^{-1}| - 1} \sqcup 2^{k - 1}(2^k + 1)K_{|xAx^{-1}| - 1} \sqcup 2^{k - 1}(2^k - 1)K_{|xBx^{-1}| - 1}.
\]
That is, $\Gamma_{PSL(2, 2^k)} = (2^k + 1)K_{2^k - 1} \sqcup 2^{k - 1}(2^k + 1)K_{2^k - 2} \sqcup 2^{k - 1}(2^k - 1)K_{2^k}$. Hence, the result follows from   \eqref{prethm1}.
\end{proof}

\begin{proposition}
The spectrum of the commuting graph of the general linear group  $GL(2, q)$, where $q = p^n > 2$ and $p$ is a prime integer,   is given by
\[
\{(-1)^{q^4 -q^3 - 2q^2 - q}, (q^2 -3q + 1)^{\frac{q(q + 1)}{2}}, (q^2 - q - 1)^{\frac{q(q - 1)}{2}}, (q^2 - 2q)^{q + 1}\}.
\]
\end{proposition}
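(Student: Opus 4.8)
The plan is to follow the template already established in Propositions \ref{semid} and \ref{psl}: confirm that $GL(2,q)$ is an AC-group, enumerate the distinct centralizers of its non-central elements together with their orders and their numbers of conjugates, and then feed this data into Lemma \ref{AC-Lem} and \eqref{prethm1}. First I would record the basic numerical data: $GL(2,q)$ has order $q(q-1)^2(q+1) = q^4 - q^3 - q^2 + q$, and its centre consists of the nonzero scalar matrices, so $|Z(GL(2,q))| = q - 1$.

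The heart of the argument is the classification of centralizers of non-central elements, for which I would invoke the relevant structure result from \cite{Ab06} (analogous to Proposition 3.21 used for $PSL(2,2^k)$). A non-central element $g$ falls into exactly one of three classes according to its rational canonical form: (i) $g$ is split semisimple with two distinct eigenvalues in $\mathbb{F}_q$, whose centralizer is a split maximal torus of order $(q-1)^2$; (ii) $g$ is non-split semisimple with eigenvalues in $\mathbb{F}_{q^2}\setminus\mathbb{F}_q$, whose centralizer is a non-split maximal torus of order $q^2 - 1$; and (iii) $g$ is non-semisimple (a non-scalar element with a repeated eigenvalue), whose centralizer has order $q(q-1)$. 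In each case the centralizer is abelian, so $GL(2,q)$ is indeed an AC-group.

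The step I expect to be the main obstacle is counting the distinct centralizers of each type, i.e. the number of conjugates of each centralizer, since these become the multiplicities of the positive eigenvalues. I would compute each as $|G|/|N_G(X)|$ for a representative $X$. For the split torus the normalizer has order $2(q-1)^2$ (Weyl group of order two), giving $\frac{q(q+1)}{2}$ conjugates; for the non-split torus the normalizer has order $2(q^2-1)$, giving $\frac{q(q-1)}{2}$ conjugates; and the type-(iii) centralizers are indexed by the $q+1$ fixed lines in $\mathbb{P}^1(\mathbb{F}_q)$, giving $q+1$ of them.

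With this data, subtracting $|Z(GL(2,q))| = q-1$ from each centralizer order, Lemma \ref{AC-Lem} yields
\[
\Gamma_{GL(2,q)} = \tfrac{q(q+1)}{2}K_{(q-1)(q-2)} \sqcup \tfrac{q(q-1)}{2}K_{q(q-1)} \sqcup (q+1)K_{(q-1)^2}.
\]
Applying \eqref{prethm1} then gives the positive eigenvalues $q^2 - 3q + 1$, $q^2 - q - 1$ and $q^2 - 2q$ with the stated multiplicities $\frac{q(q+1)}{2}$, $\frac{q(q-1)}{2}$ and $q+1$. The multiplicity of $-1$ follows as a bookkeeping check: the total number of vertices $|G| - |Z(G)| = q^4 - q^3 - q^2 + 1$ minus the number of components $q^2 + q + 1$ equals $q^4 - q^3 - 2q^2 - q$, confirming the exponent on $-1$ and completing the computation.
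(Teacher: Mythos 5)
Your proposal is correct and follows essentially the same route as the paper: both identify $GL(2,q)$ as an AC-group, classify the centralizers of non-central elements into the three conjugacy classes of abelian subgroups of orders $(q-1)^2$, $q^2-1$ and $q(q-1)$ with $\frac{q(q+1)}{2}$, $\frac{q(q-1)}{2}$ and $q+1$ conjugates respectively (the paper cites Proposition 3.26 and Lemma 3.5 of \cite{Ab06} for this data, while you additionally sketch the normalizer computations), and then apply Lemma \ref{AC-Lem} and \eqref{prethm1}. All your numerical details, including the component sizes and the exponent of $-1$, agree with the paper's.
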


\begin{proof}
We have $|GL(2, q)| = (q^2 -1)(q^2 - q)$ and $|Z(GL(2, q))| = q - 1$. By Proposition 3.26 of  \cite{Ab06}, the set of centralizers of non-central elements of $GL(2, q)$ is given by
\[
\{xDx^{-1}, xIx^{-1}, xPZ(GL(2, q))x^{-1} : x \in GL(2, q)\}
\]
where $D$ is the subgroup of $GL(2, q)$ consisting  of all diagonal matrices, $I$ is a  cyclic subgroup of $GL(2, q)$ having order $q^2 - 1$  and $P$ is the Sylow $p$-subgroup of $GL(2, q)$ consisting of all upper triangular matrices with $1$ in the diagonal. The orders of  $D$ and $PZ(GL(2, q))$ are  $(q - 1)^2$ and $q(q - 1)$ respectively. Also   the number of conjugates of $D, I$ and $PZ(GL(2, q))$ in $GL(2, q)$  are  $\frac{q(q + 1)}{2}, \frac{q(q - 1)}{2}$ and $q + 1$ respectively. Since $GL(2, q)$ is an AC-group (see Lemma 3.5 of \cite{Ab06}), by Lemma \ref{AC-Lem} we have $\Gamma_{GL(2, q)} =$
\[
 \frac{q(q + 1)}{2}K_{|xDx^{-1}| - q + 1} \sqcup \frac{q(q - 1)}{2}K_{|xIx^{-1}| - q + 1} \sqcup (q + 1)K_{|xPZ(GL(2, q))x^{-1}| - q + 1}.
\]
That is, $\Gamma_{GL(2, q)} = \frac{q(q + 1)}{2}K_{q^2 - 3q + 2} \sqcup \frac{q(q - 1)}{2}K_{q^2 - q} \sqcup (q + 1)K_{q^2 - 2q + 1}$. Hence, the result follows from   \eqref{prethm1}.
\end{proof}

\begin{theorem} \label{order-20}
Let $G$ be a finite group and $\frac{G}{Z(G)} \cong Sz(2)$, where $Sz(2)$ is the Suzuki group presented by $\langle a, b : a^5 = b^4 = 1, b^{-1}ab = a^2 \rangle$. Then
\[
\spec(\Gamma_G) = \{(-1)^{19|Z(G)| - 6}, (4|Z(G)| - 1)^1,  (3|Z(G)| - 1)^5\}.
\]
\end{theorem}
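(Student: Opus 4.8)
The plan is to show that the hypothesis $G/Z(G) \cong Sz(2)$ forces $G$ to be an AC-group with exactly six distinct centralizers of non-central elements, and then to read off the spectrum from Lemma~\ref{AC-Lem} and \eqref{prethm1}. Write $\pi : G \to G/Z(G) \cong Sz(2)$ for the quotient map and note that $|G| = 20|Z(G)|$.

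First I would record the centralizer structure of $Sz(2)$. Using the presentation $\langle a,b : a^5 = b^4 = 1, b^{-1}ab = a^2\rangle$, a short computation shows that $C_{Sz(2)}(a) = \langle a\rangle$ has order $5$, while $C_{Sz(2)}(b^j) = \langle b\rangle$ has order $4$ for $1 \le j \le 3$; hence every centralizer of a non-identity element is cyclic and self-centralizing. Counting conjugates, $\langle a\rangle$ is normal (the unique Sylow $5$-subgroup) and the complement $\langle b\rangle$ is self-normalizing, so there are exactly six distinct centralizers of non-identity elements in $Sz(2)$: the single subgroup $\langle a\rangle$ of order $5$, and the five conjugates of $\langle b\rangle$, each of order $4$.

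Next I would lift this to $G$. The key observation is that for a central subgroup $Z \le H$, if $H/Z$ is cyclic then $H$ is abelian. For any $x \in G\setminus Z(G)$ we have $C_G(x)/Z(G) \le C_{Sz(2)}(\overline{x})$, which is cyclic; since $Z(G) \le Z(C_G(x))$, this forces $C_G(x)$ to be abelian, so $G$ is an AC-group. To pin down the order, set $H = \pi^{-1}(C_{Sz(2)}(\overline{x}))$; then $H/Z(G)$ is cyclic, so $H$ is abelian and contains $x$, giving $H \le C_G(x)$, while the containment $C_G(x)/Z(G) \le C_{Sz(2)}(\overline{x}) = H/Z(G)$ gives the reverse inclusion. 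Hence $C_G(x) = \pi^{-1}(C_{Sz(2)}(\overline{x}))$ and $|C_G(x)| = |Z(G)|\cdot|C_{Sz(2)}(\overline{x})|$. Since $\pi^{-1}$ is injective on subgroups containing $Z(G)$, it follows that $G$ has exactly six distinct centralizers of non-central elements: one of order $5|Z(G)|$ and five of order $4|Z(G)|$.

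Finally, Lemma~\ref{AC-Lem} yields $\Gamma_G = K_{4|Z(G)|} \sqcup 5K_{3|Z(G)|}$, since $5|Z(G)| - |Z(G)| = 4|Z(G)|$ and $4|Z(G)| - |Z(G)| = 3|Z(G)|$; as a consistency check, the vertex count is $4|Z(G)| + 15|Z(G)| = 19|Z(G)| = |G| - |Z(G)|$. Applying \eqref{prethm1} (equivalently Theorem~\ref{AC-group} with $n = 6$) then gives the multiplicity of $-1$ as $19|Z(G)| - 6$, together with the eigenvalues $4|Z(G)| - 1$ (once) and $3|Z(G)| - 1$ (five times), which is exactly the claimed spectrum. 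I expect the main obstacle to be the lifting step: establishing simultaneously that $G$ is an AC-group and that the centralizers have precisely the orders $4|Z(G)|$ and $5|Z(G)|$, which hinges on the elementary but essential fact that a group with cyclic central quotient is abelian.
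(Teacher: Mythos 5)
Your proof is correct. The paper's own proof reaches exactly the same endpoint --- $G$ is an AC-group with six distinct centralizers of non-central elements, one of order $5|Z(G)|$ and five of order $4|Z(G)|$, hence $\Gamma_G = K_{4|Z(G)|}\sqcup 5K_{3|Z(G)|}$ and the spectrum follows from Lemma~\ref{AC-Lem} and \eqref{prethm1} --- but it gets there by explicitly writing each of the six centralizers as a union of cosets of $Z(G)$ (e.g.\ $C_G(ab)=Z(G)\sqcup abZ(G)\sqcup a^4b^2Z(G)\sqcup a^3b^3Z(G)$) and asking the reader to ``observe'' that these are all of them and that they are abelian. You instead work in the quotient: every centralizer of a non-identity element of $Sz(2)$ is cyclic (either $\langle a\rangle$ or one of the five conjugates of $\langle b\rangle$), and the standard fact that a group with cyclic central quotient is abelian then yields simultaneously that $G$ is an AC-group and that $C_G(x)=\pi^{-1}(C_{Sz(2)}(\overline{x}))$ exactly, which pins down the orders. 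Your route supplies the justification the paper leaves implicit and is more portable --- the same lifting argument underlies Theorem~\ref{main2} and would apply to any $G$ whose central quotient has only cyclic centralizers of non-trivial elements --- at the cost of needing the (easy) conjugacy count in $Sz(2)$ rather than a bare list of cosets.
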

\begin{proof}
We have
\[
\frac{G}{Z(G)} = \langle aZ(G), bZ(G) : a^5Z(G) = b^4Z(G) = Z(G), b^{-1}abZ(G) = a^2Z(G) \rangle.
\]
 Observe that 
\[
\begin{array}{ll}
C_G(a)  &= Z(G)\sqcup aZ(G) \sqcup a^2Z(G)\sqcup a^3Z(G)\sqcup a^4Z(G),\\
C_G(ab) &= Z(G)\sqcup abZ(G) \sqcup a^4b^2Z(G)\sqcup a^3b^3Z(G),\\
C_G(a^2b) &= Z(G)\sqcup a^2bZ(G) \sqcup a^3b^2Z(G)\sqcup ab^3Z(G),\\
C_G(a^2b^3) &= Z(G)\sqcup a^2b^3Z(G) \sqcup ab^2Z(G)\sqcup a^4bZ(G),\\ 
C_G(b) &= Z(G)\sqcup bZ(G) \sqcup b^2Z(G)\sqcup b^3Z(G) \quad \text{ and }\\
C_G(a^3b) &= Z(G)\sqcup a^3bZ(G) \sqcup a^2b^2Z(G)\sqcup a^4b^3Z(G)
\end{array}
\]
are the only centralizers of non-central elements of $G$. Also note that these centralizers are abelian subgroups of $G$. Thus $G$ is an  AC-group. By Lemma \ref{AC-Lem}, we have 
\[
\Gamma_G = K_{4|Z(G)|}\sqcup 5K_{3|Z(G)|}
\]
 since $|C_G(a)| = 5|Z(G)|$ and 
\[
|C_G(ab)| =  |C_G(a^2b)| =  |C_G(a^2b^3)| =  |C_G(b)| =  |C_G(a^3b)| = 4|Z(G)|.
\]
 Therefore, by   \eqref{prethm1}, the result follows.
\end{proof}

\begin{proposition}\label{Hanaki1}
Let $F = GF(2^n), n \geq 2$ and $\vartheta$ be the Frobenius  automorphism of $F$, i. e., $\vartheta(x) = x^2$ for all $x \in F$. Then the spectrum of the commuting graph of the group 
\[
A(n, \vartheta) = \left\lbrace U(a, b) = \begin{bmatrix}
        1 & 0 & 0\\
        a & 1 & 0\\
        b & \vartheta(a) & 1
       \end{bmatrix} : a, b \in F \right\rbrace.
\] 
under matrix multiplication given by $U(a, b)U(a', b') = U(a + a', b + b' + a'\vartheta(a))$ is
\[
\Gamma_{A(n, \vartheta)} = \{(-1)^{(2^n - 1)^2}, (2^n - 1)^{2^n - 1}\}.
\] 
\end{proposition}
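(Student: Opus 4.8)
The plan is to show that $A(n, \vartheta)$ is a non-abelian AC-group with an unusually uniform centralizer structure, and then to invoke Theorem~\ref{AC-group} (equivalently Lemma~\ref{AC-Lem} together with \eqref{prethm1}). The whole argument rests on one commutativity computation, so I would do that first and read everything else off it.

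First I would extract the commuting condition from the multiplication rule. Since $U(a,b)U(a',b') = U(a+a', b+b'+a'\vartheta(a))$ and $U(a',b')U(a,b) = U(a'+a, b'+b+a\vartheta(a'))$, the first coordinates always agree, and the two products coincide exactly when $a'\vartheta(a) = a\vartheta(a')$, i.e. $a'a^2 = a(a')^2$. Because the field has characteristic $2$, this rearranges via the factorization $a'a^2 + a(a')^2 = aa'(a+a')$ to the clean condition
\[
aa'(a+a') = 0.
\]
This single identity is the engine of the whole proof, and it is the only step that genuinely requires care.

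Next I would read off the center and the centralizers. The condition $aa'(a+a')=0$ holds for \emph{every} $a'$ precisely when $a=0$ (for $a\neq 0$ one may take $a'\notin\{0,a\}$, possible as $|F|\geq 4$, and get a nonzero product); hence $Z(A(n,\vartheta)) = \{U(0,b) : b\in F\}$ has order $2^n$, while $|A(n,\vartheta)| = 2^{2n}$, so the group is non-abelian. For a non-central element $U(a,b)$ (so $a\neq 0$) the condition forces $a'(a+a')=0$, i.e. $a'\in\{0,a\}$, giving
\[
C_{A(n,\vartheta)}(U(a,b)) = \{U(a',b') : a'\in\{0,a\},\ b'\in F\},
\]
a subgroup of order $2^{n+1}$ that depends only on $a$. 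Applying $aa'(a+a')=0$ to any two elements of this set, whose first coordinates both lie in $\{0,a\}$, shows they commute (each case gives a factor $0$, using $a+a=0$ in characteristic $2$), so every such centralizer is abelian and $A(n,\vartheta)$ is an AC-group. There are therefore exactly $2^n-1$ distinct centralizers of non-central elements, one for each nonzero $a\in F$.

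Finally I would feed this into the machinery. Each distinct centralizer $X_i$ satisfies $|X_i| - |Z(A(n,\vartheta))| = 2^{n+1} - 2^n = 2^n$, so Lemma~\ref{AC-Lem} yields $\Gamma_{A(n,\vartheta)} = (2^n-1)K_{2^n}$. The asserted spectrum then follows from the special case $\spec(lK_m) = \{(-1)^{l(m-1)}, (m-1)^l\}$ recorded after \eqref{prethm1}, taken with $l = 2^n-1$ and $m = 2^n$: the eigenvalue $-1$ occurs with multiplicity $(2^n-1)(2^n-1) = (2^n-1)^2$, and the eigenvalue $m-1 = 2^n-1$ occurs with multiplicity $l = 2^n-1$. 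I expect no real obstacle beyond the characteristic-$2$ factorization above, which is exactly what collapses the commuting condition to the membership $a'\in\{0,a\}$ and makes all the centralizers the same size.
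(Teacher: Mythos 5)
Your proof is correct and follows essentially the same route as the paper: identify the center and the centralizers of non-central elements via the commutation condition, observe that $A(n,\vartheta)$ is an AC-group with $2^n-1$ distinct centralizers each of order $2^{n+1}$, and apply Lemma~\ref{AC-Lem} together with \eqref{prethm1} to get $\Gamma_{A(n,\vartheta)} = (2^n-1)K_{2^n}$. One point in your favour: you correctly obtain $|Z(A(n,\vartheta))| = 2^n$, whereas the paper's proof misprints this order as $2^n-1$ even though its subsequent conclusion $(2^n-1)K_{2^n}$ is only consistent with the value $2^n$ that you derived.
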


\begin{proof}
Note that $Z(A(n, \vartheta)) = \{U(0, b) : b\in F\}$ and so $|Z(A(n, \vartheta))| = 2^n - 1$. Let $U(a, b)$ be a non-central element of $A(n, \vartheta)$. It can be seen that the centralizer of $U(a, b)$ in $A(n, \vartheta)$ is $Z(A(n, \vartheta))\sqcup U(a, 0)Z(A(n, \vartheta))$. Clearly $A(n, \vartheta)$ is an AC-group and so by Lemma \ref{AC-Lem} we have $\Gamma_{A(n, \vartheta)} = (2^n - 1)K_{2^n}$. Hence the result follows by     \eqref{prethm1}.
\end{proof}

\begin{proposition}\label{Hanaki2}
Let $F = GF(p^n)$, $p$ be a prime. Then the spectrum of the commuting graph of the group 
\[
A(n, p) = \left\lbrace V(a, b, c) = \begin{bmatrix}
        1 & 0 & 0\\
        a & 1 & 0\\
        b & c & 1
       \end{bmatrix} : a, b, c \in F \right\rbrace.
\]
under matrix multiplication $V(a, b, c)V(a', b', c') = V(a + a', b + b' + ca', c + c')$ is
\[
\Gamma_{A(n, p)} = \{(-1)^{p^{3n} -2p^{n} -  1},   (p^{2n} - p^n - 1)^{p^n + 1}\}.
\] 
\end{proposition}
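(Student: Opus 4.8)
The plan is to follow the template already used for the earlier AC-group propositions: locate the centre, compute the centralizers of the non-central elements, verify that these centralizers are abelian so that $A(n,p)$ is an AC-group, count the distinct centralizers, and then invoke Lemma \ref{AC-Lem} together with \eqref{prethm1}. First I would record that $|A(n,p)| = |F|^3 = p^{3n}$, since $V(a,b,c)$ is parametrized freely by $(a,b,c) \in F^3$. To find the centre I would impose that $V(a,b,c)$ commute with an arbitrary $V(a',b',c')$; under the given multiplication rule the first and third coordinates always agree, while equality in the second coordinate forces $ca' = c'a$ for all $a', c' \in F$. Taking $(a',c') = (1,0)$ and $(0,1)$ yields $c = 0$ and $a = 0$, so $Z(A(n,p)) = \{V(0,b,0) : b \in F\}$ and hence $|Z(A(n,p))| = p^n$.

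Next I would compute the centralizer of a non-central element $V(a,b,c)$, that is, one with $(a,c) \neq (0,0)$. The same coordinate comparison shows that $V(x,y,z)$ commutes with $V(a,b,c)$ exactly when $az = cx$, with $y$ unconstrained. This condition cuts out the line through the origin spanned by $(a,c)$ in the $(x,z)$-plane, so whether $a \neq 0$ (then $z$ is determined by $x$) or $a = 0,\, c \neq 0$ (then $x = 0$ and $z$ is free), the centralizer has exactly $p^n \cdot p^n = p^{2n}$ elements. To confirm that $A(n,p)$ is an AC-group I would check that any two elements of a fixed centralizer commute: two solutions $(x_i, y_i, z_i)$ of $az = cx$ satisfy $x_1 z_2 = x_2 z_1$ (both sides agreeing once the common relation $z = (c/a)x$, or $x = 0$, is substituted), which is precisely the commuting condition between them. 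Hence each centralizer is abelian.

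The key combinatorial step is counting the distinct centralizers. Since the centralizer of $V(a,b,c)$ depends only on the line $\langle (a,c)\rangle$ in $F^2$, two non-central elements share a centralizer if and only if their $(a,c)$-parts are proportional; the distinct centralizers are therefore in bijection with the points of the projective line $\mathbb{P}^1(F)$, of which there are $p^n + 1$. As a consistency check, the $p^n+1$ centralizers each contribute $p^{2n} - p^n$ non-central elements and meet pairwise only in $Z(A(n,p))$, accounting for $(p^n+1)(p^{2n}-p^n) = p^{3n} - p^n$ non-central elements exactly. Applying Lemma \ref{AC-Lem} then gives $\Gamma_{A(n,p)} = (p^n+1)K_{p^{2n}-p^n}$, and \eqref{prethm1} yields the eigenvalue $p^{2n} - p^n - 1$ with multiplicity $p^n + 1$ together with the eigenvalue $-1$ with multiplicity $(p^n+1)(p^{2n}-p^n-1) = p^{3n} - 2p^n - 1$, as claimed. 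I expect the main obstacle to be justifying the count of $p^n + 1$ distinct centralizers cleanly, namely the identification with $\mathbb{P}^1(F)$ and the uniform order $p^{2n}$, since everything else reduces to the routine application of the two established results.
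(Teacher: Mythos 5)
Your proposal is correct and follows essentially the same route as the paper: compute $Z(A(n,p))$, determine the centralizers of non-central elements (each of order $p^{2n}$ and abelian), conclude $\Gamma_{A(n,p)} = (p^n+1)K_{p^{2n}-p^n}$ via Lemma \ref{AC-Lem}, and apply \eqref{prethm1}. The only difference is presentational: the paper enumerates the $p^n+1$ centralizers through a three-way case analysis on $(a,c)$, whereas you package the same count uniformly as the points of the projective line over $F$.
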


\begin{proof}
We have $Z(A(n, p)) = \{V(0, b, 0) : b \in F\}$ and so $|Z(A(n, p))| = p^n$. The centralizers of non-central elements of $A(n, p)$ are given by
\begin{enumerate}
\item If $b, c \in F$ and $c \ne 0$ then the centralizer of $V(0, b, c)$ in $A(n, p)$ is\\ $\{V(0, b', c') : b', c' \in F\}$ having order $|p^{2n}|$. 
\item If $a, b \in F$ and $a \ne 0$ then the centralizer of $V(a, b, 0)$ in $A(n, p)$ is\\ $\{V(a', b', 0) : a', b' \in F\}$ having order $|p^{2n}|$. 
\item If $a, b, c \in F$ and $a \ne 0, c \ne 0$ then the centralizer of $V(a, b, c)$ in $A(n, p)$ is $\{V(a', b', ca'a^{-1}) : a', b' \in F\}$ having order $|p^{2n}|$.
\end{enumerate}
It can be seen that all the centralizers of non-central elements of $A(n, p)$ are abelian. Hence $A(n, p)$ is an AC-group and so 
\[
\Gamma_{A(n, p)} = K_{p^{2n} - p^n}\sqcup K_{p^{2n} - p^n}\sqcup (p^n - 1)K_{p^{2n} - p^n} = (p^n + 1)K_{p^{2n} - p^n}.
\]
Hence the result follows  from   \eqref{prethm1}.
\end{proof}

We would like to mention here that the groups considered in Proposition \ref{Hanaki1}-\ref{Hanaki2} are constructed by Hanaki (see \cite{Han96}). These groups are also considered in \cite{ali00}, in order to compute their numbers of distinct centralizers.

\section{Some applications}
In this section, we show that the  commuting graph of a finite non-abelian group $G$ is  integral if $G$ is not isomorphic to $S_4$ and the commuting graph of $G$ is planar. We also show that the  commuting graph of a finite non-abelian group $G$ is  integral if the commuting graph of $G$   is toroidal. We shall use  the following  results.
\begin{theorem}\label{main2}
Let $G$ be a finite group such that $\frac{G}{Z(G)} \cong {\mathbb{Z}}_p \times {\mathbb{Z}}_p$, where $p$ is a prime integer. Then 
\[
\spec(\Gamma_G) = \{(-1)^{(p^2 - 1)|Z(G)| - p - 1}, ((p - 1)|Z(G)| - 1)^{p + 1}\}.
\]
\end{theorem}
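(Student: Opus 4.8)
The plan is to show that $G$ is a non-abelian AC-group whose commuting graph is a disjoint union of $p+1$ copies of a single complete graph, and then to read off the spectrum from the $lK_m$ specialisation of \eqref{prethm1}. First I would note that, since $\frac{G}{Z(G)} \cong \mathbb{Z}_p \times \mathbb{Z}_p$ is abelian, every centralizer $C_G(x)$ contains $Z(G)$, so $C_G(x)/Z(G)$ is a genuine subgroup of $G/Z(G)$, which I view as the two-dimensional vector space $\mathbb{F}_p^2$.

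The heart of the argument is pinning down these centralizers. Fix a non-central $x$. Then $xZ(G)$ is a nonzero vector of $\mathbb{F}_p^2$, so $\langle xZ(G)\rangle$ is a one-dimensional subspace (a line), and since $x \in C_G(x)$ we have $\langle xZ(G)\rangle \subseteq C_G(x)/Z(G)$. On the other hand $C_G(x) \neq G$, for otherwise $x$ would be central; hence $C_G(x)/Z(G)$ is a \emph{proper} subgroup of $\mathbb{F}_p^2$. The only subgroups of $\mathbb{F}_p^2$ are $\{0\}$, the $p+1$ lines, and the whole space, so a proper subgroup containing a given line must equal that line. Therefore $C_G(x)/Z(G) = \langle xZ(G)\rangle$, which forces $|C_G(x)| = p|Z(G)|$ and $C_G(x) = \langle x, Z(G)\rangle$. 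Because $Z(G)$ is central and $C_G(x)/Z(G)$ is cyclic, $C_G(x)$ is abelian, so $G$ is a non-abelian AC-group.

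Next I would count the distinct centralizers. Two non-central elements $x$ and $y$ determine the same centralizer exactly when $\langle xZ(G)\rangle = \langle yZ(G)\rangle$, i.e.\ when $xZ(G)$ and $yZ(G)$ span the same line; thus the distinct centralizers of non-central elements correspond bijectively to the $p+1$ lines of $\mathbb{F}_p^2$, and each has order $p|Z(G)|$. By Lemma~\ref{AC-Lem} this gives
\[
\Gamma_G = (p+1)\,K_{p|Z(G)| - |Z(G)|} = (p+1)\,K_{(p-1)|Z(G)|}.
\]
Finally, the formula $\spec(lK_m) = \{(-1)^{l(m-1)}, (m-1)^l\}$ displayed after \eqref{prethm1}, applied with $l = p+1$ and $m = (p-1)|Z(G)|$, yields the stated spectrum after the routine simplifications $l(m-1) = (p^2-1)|Z(G)| - (p+1)$ and $m-1 = (p-1)|Z(G)| - 1$.

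The main obstacle is the centralizer computation in the second paragraph, and specifically the inequality that $C_G(x)$ cannot be strictly larger than $\langle x, Z(G)\rangle$. Everything rests on the fact that $\mathbb{F}_p^2$ has no subgroup lying strictly between a line and the whole plane, which is precisely the structural feature that would fail for larger central quotients. Once this is established, the AC-group machinery culminating in Theorem~\ref{AC-group} delivers the spectrum mechanically.
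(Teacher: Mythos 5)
Your proposal is correct and follows the same route as the paper, which simply cites Theorem~\ref{AC-group} after asserting that $G$ is an AC-group with $p+1$ distinct centralizers of non-central elements, each of order $p|Z(G)|$. You have merely supplied the (correct) justification for that assertion via the subgroup structure of $\mathbb{Z}_p\times\mathbb{Z}_p$, which the paper leaves implicit.
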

\begin{proof}
The result follows from Theorem \ref{AC-group} noting that $G$ is an AC-group with $p+1$ distinct centralizers of non-central elements and all of them have order $p|Z(G)|$.
\end{proof}

\begin{proposition}\label{main05}
Let $D_{2m} = \langle a, b : a^m = b^{2} = 1, bab^{-1} = a^{-1} \rangle$ be  the dihedral group of order $2m$, where $m > 2$. Then 
\[
\spec(\Gamma_{D_{2m}}) = \begin{cases}
&\{(-1)^{m - 2}, 0^m, (m - 2)^1\} \text{ if $m$ is odd}\\
&\{(-1)^{\frac{3m}{2} - 3}, 1^{\frac{m}{2}}, (m - 3)^1\} \text{ if $m$ is even}. 
\end{cases} 
\]  
\end{proposition}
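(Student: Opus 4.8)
The plan is to exhibit $D_{2m}$ as a finite non-abelian AC-group and then apply Theorem \ref{AC-group} (equivalently Lemma \ref{AC-Lem} together with \eqref{prethm1}), treating the cases $m$ odd and $m$ even separately, since the centre of $D_{2m}$ depends on the parity of $m$.

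First I would record the centre: $Z(D_{2m}) = \{1\}$ when $m$ is odd and $Z(D_{2m}) = \{1, a^{m/2}\}$ when $m$ is even. Next I would determine the centralizers of the non-central elements. Every non-central rotation $a^i$ has centralizer equal to the cyclic subgroup $\langle a \rangle$ of order $m$, which is abelian. A reflection $a^i b$ has centralizer $\{1, a^i b\}$ of order $2$ when $m$ is odd, and $\{1, a^{m/2}, a^i b, a^{i+m/2}b\}$ of order $4$ when $m$ is even; a direct computation using $bab^{-1} = a^{-1}$ shows the latter set is abelian (isomorphic to ${\mathbb{Z}}_2 \times {\mathbb{Z}}_2$). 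Since all centralizers of non-central elements are abelian, $D_{2m}$ is an AC-group, so Lemma \ref{AC-Lem} applies.

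I would then count the distinct centralizers. In both parities there is a single centralizer arising from the rotations, namely $\langle a \rangle$. For the reflections, when $m$ is odd each of the $m$ reflections gives a distinct centralizer $\{1, a^i b\}$, whereas when $m$ is even the centralizer $\{1, a^{m/2}, a^i b, a^{i+m/2}b\}$ contains two reflections, so the $m$ reflections collapse into exactly $m/2$ distinct centralizers. Feeding these orders into Lemma \ref{AC-Lem} yields $\Gamma_{D_{2m}} = K_{m-1} \sqcup m K_{1}$ when $m$ is odd and $\Gamma_{D_{2m}} = K_{m-2} \sqcup \tfrac{m}{2} K_{2}$ when $m$ is even.

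Finally I would read off the spectrum from \eqref{prethm1}. In the odd case the $m$ copies of $K_1$ contribute the eigenvalue $0$ with multiplicity $m$, while $K_{m-1}$ contributes $(-1)^{m-2}$ and $(m-2)^1$, giving the stated result. In the even case $\tfrac{m}{2}K_2$ contributes $1^{m/2}$ together with a block of $(-1)$'s, and $K_{m-2}$ contributes $(m-3)^1$; tallying the $-1$ eigenvalues via \eqref{prethm1} gives multiplicity $(2m-2) - (1 + \tfrac{m}{2}) = \tfrac{3m}{2} - 3$, matching the claim. I expect the only delicate point to be the bookkeeping of the reflection centralizers in the even case, namely noticing that each order-$4$ centralizer accounts for two reflections, so that the number of complete-graph summands and the total vertex count come out correctly; the remaining steps are routine substitutions into \eqref{prethm1}.
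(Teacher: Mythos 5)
Your proposal is correct and follows essentially the same route as the paper: identify $D_{2m}$ as an AC-group, split by parity of $m$ to obtain $\Gamma_{D_{2m}} = K_{m-1}\sqcup mK_1$ (odd) and $K_{m-2}\sqcup \frac{m}{2}K_2$ (even), then read off the spectrum from \eqref{prethm1}. The only difference is that you spell out the centralizer computations in more detail than the paper, which simply asserts the counts.
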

\begin{proof}
Note that $D_{2m}$ is a non-abelian AC-group. If $m$ is even then $|Z(D_{2m})| = 2$ and $D_{2m}$ has $\frac{m}{2} + 1$ distinct centralizers of non-central elements. Out of these centralizers one has order $m$ and the rests have order $4$. Therefore $\Gamma_{D_{2m}} = K_{m - 2} \sqcup \frac{m}{2}K_2$. If $m$ is odd then  $|Z(D_{2m})| = 1$ and $D_{2m}$ has $m + 1$ distinct centralizers of non-central elements. In this case, one centralizer has order $m$ and the rests have order $2$. Therefore $\Gamma_{D_{2m}} = K_{m - 1} \sqcup mK_1$. Hence the result follows from  \eqref{prethm1}. 
\end{proof}

\begin{proposition}\label{q4m}
The spectrum of the commuting graph of the generalized quaternion group $Q_{4n} = \langle x, y : y^{2n} = 1, x^2 = y^n,yxy^{-1} = y^{-1}\rangle$, where $n \geq 2$, is given by
\[
\spec(\Gamma_{Q_{4n}}) = \{(-1)^{3n - 3},  1^n, (2n - 3)^1\}.
\]
\end{proposition}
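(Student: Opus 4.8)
The plan is to follow exactly the template set by Propositions \ref{semid} and \ref{main05}: identify the centre and the full list of distinct centralizers of non-central elements, verify that $Q_{4n}$ is an AC-group, apply Lemma \ref{AC-Lem} to describe $\Gamma_{Q_{4n}}$ as a disjoint union of complete graphs, and finally read off the spectrum from \eqref{prethm1}. First I would record the standard structural facts about $Q_{4n}$: the cyclic subgroup $\langle y\rangle$ has order $2n$, every element outside $\langle y\rangle$ has the form $xy^{j}$ and squares to $y^{n}$, and the centre is $Z(Q_{4n})=\{1,y^{n}\}$, so that $|Z(Q_{4n})|=2$.

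Next I would determine the centralizers of the non-central elements. For a non-central power of $y$ the centralizer is the whole cyclic subgroup $\langle y\rangle$, of order $2n$, since $x$ inverts $y$ and hence commutes with no non-central power of $y$. For an element of the form $xy^{j}$, a direct check shows its centralizer is $\{1,\,y^{n},\,xy^{j},\,xy^{j+n}\}$, a cyclic subgroup of order $4$. The essential bookkeeping step is to notice that $xy^{j}$ and $xy^{j+n}$ determine the \emph{same} centralizer, so the reflection-type elements contribute exactly $n$ distinct centralizers, each of order $4$. Thus the complete list of distinct centralizers of non-central elements consists of one subgroup of order $2n$ and $n$ subgroups of order $4$; each of these is abelian, so $Q_{4n}$ is a non-abelian AC-group.

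Now I would invoke Lemma \ref{AC-Lem}. Removing the $|Z(Q_{4n})|=2$ central elements from each centralizer yields complete graphs of orders $2n-2$ and $2$ respectively, so
\[
\Gamma_{Q_{4n}} = K_{2n-2}\,\sqcup\, nK_{2}.
\]
As a sanity check, the vertex count $(2n-2)+2n=4n-2$ agrees with $|Q_{4n}|-|Z(Q_{4n})|$, and this disjoint union is precisely the graph obtained for the dihedral group $D_{4n}$ in the even case of Proposition \ref{main05} (the case $m=2n$), so the two spectra must coincide. Finally, applying \eqref{prethm1} with $\sum_i m_i = 4n-2$ and $l=n+1$ components gives multiplicity $(4n-2)-(n+1)=3n-3$ for the eigenvalue $-1$, eigenvalue $2n-3$ from $K_{2n-2}$ with multiplicity $1$, and eigenvalue $1$ from each copy of $K_{2}$ with total multiplicity $n$, yielding the claimed spectrum $\{(-1)^{3n-3},\,1^{n},\,(2n-3)^{1}\}$.

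I expect the only genuine obstacle to be the correct enumeration of the order-$4$ centralizers, namely establishing the two-to-one identification $xy^{j}\leftrightarrow xy^{j+n}$ so that one counts $n$ rather than $2n$ distinct centralizers; everything after that is a routine substitution into Lemma \ref{AC-Lem} and \eqref{prethm1}.
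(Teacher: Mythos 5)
Your proposal is correct and follows essentially the same route as the paper: both identify $Q_{4n}$ as an AC-group with one centralizer of order $2n$ and $n$ centralizers of order $4$, obtain $\Gamma_{Q_{4n}} = K_{2n-2} \sqcup nK_2$ via Lemma \ref{AC-Lem}, and read off the spectrum from \eqref{prethm1}. The paper's proof merely asserts the centralizer count without the explicit verification (in particular the two-to-one identification $xy^{j}\leftrightarrow xy^{j+n}$) that you supply, so your write-up is a more detailed version of the same argument.
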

\begin{proof}
Note that $Q_{4n}$ is a non-abelian AC-group with $n + 1$ distinct centralizers of non-central elements. Out of these centralizers one has order $2n$ and the rests have order $4$. Also $|Z(Q_{4n})| = 2$. Therefore $\Gamma_{Q_{4n}} = K_{2n - 2} \sqcup nK_2$. Hence the result follows from  \eqref{prethm1}.   
\end{proof}

As an application of Theorem \ref{main2} we have the following lemma.
\begin{lemma}\label{order16}
Let $G$ be a group isomorphic to any of the following groups
\begin{enumerate}
\item ${\mathbb{Z}}_2 \times D_8$
\item ${\mathbb{Z}}_2 \times Q_8$
\item $M_{16}  = \langle a, b : a^8 = b^2 = 1, bab = a^5 \rangle$
\item ${\mathbb{Z}}_4 \rtimes {\mathbb{Z}}_4 = \langle a, b : a^4 = b^4 = 1, bab^{-1} = a^{-1} \rangle$
\item $D_8 * {\mathbb{Z}}_4 = \langle a, b, c : a^4 = b^2 = c^2 =  1, ab = ba, ac = ca, bc = a^2cb \rangle$
\item $SG(16, 3)  = \langle a, b : a^4 = b^4 = 1, ab = b^{-1}a^{-1}, ab^{-1} = ba^{-1}\rangle$.
\end{enumerate}
Then $\spec(\Gamma_G) = \{(-1)^9, 3^3\}$.
\end{lemma}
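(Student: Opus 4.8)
The target spectrum $\{(-1)^9, 3^3\}$ is precisely the output of Theorem \ref{main2} for the prime $p = 2$ together with $|Z(G)| = 4$: substituting these values gives the exponent $(p^2 - 1)|Z(G)| - p - 1 = 3\cdot 4 - 3 = 9$ of $-1$, and the second eigenvalue $(p - 1)|Z(G)| - 1 = 3$ with multiplicity $p + 1 = 3$. Thus the plan is to reduce all six cases to a single application of Theorem \ref{main2}, by verifying that each listed group $G$ satisfies $\frac{G}{Z(G)} \cong {\mathbb{Z}}_2 \times {\mathbb{Z}}_2$ with $|Z(G)| = 4$.

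The key simplification is that for groups of order $16$ these two conditions collapse into one. Each group in the list has order $16$, so $|Z(G)| = 4$ is equivalent to $\left|\frac{G}{Z(G)}\right| = 4$. A group of order $4$ is isomorphic to either ${\mathbb{Z}}_4$ or ${\mathbb{Z}}_2 \times {\mathbb{Z}}_2$, and the former is impossible here because the central quotient $\frac{G}{Z(G)}$ is never nontrivial cyclic for a non-abelian $G$ (otherwise $G$ would be abelian). Since all six groups are patently non-abelian, it therefore suffices to establish the single numerical fact $|Z(G)| = 4$ in each case, after which $\frac{G}{Z(G)} \cong {\mathbb{Z}}_2 \times {\mathbb{Z}}_2$ follows automatically and Theorem \ref{main2} finishes the proof.

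The remaining work is the case-by-case computation of the centre. For (i) and (ii) this is immediate from $Z(H \times K) = Z(H) \times Z(K)$ together with $|Z(D_8)| = |Z(Q_8)| = 2$, giving $Z(G) \cong {\mathbb{Z}}_2 \times {\mathbb{Z}}_2$. For the central product (v), the defining relations $ab = ba$ and $ac = ca$ show that $a$ is central, so $\langle a \rangle \cong {\mathbb{Z}}_4 \subseteq Z(G)$; since $b$ and $c$ fail to commute (as $bc = a^2cb$ with $a^2 \neq 1$), the centre cannot be larger, whence $|Z(G)| = 4$. For the twisted presentations (iii), (iv) and (vi), I would write a general element as $a^i b^j$ and impose commutation with the two generators $a$ and $b$ using the single structural relation (for instance $bab^{-1} = a^5$ in $M_{16}$, or $bab^{-1} = a^{-1}$ in ${\mathbb{Z}}_4 \rtimes {\mathbb{Z}}_4$); this pins down the admissible exponents $(i, j)$ and again yields a centre of order exactly $4$.

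The only point requiring genuine care --- the main, if mild, obstacle --- is confirming that the listed relations really present groups of order $16$ with centre of order $4$, rather than some proper quotient. This is reassuringly checked by the complementary count: there are nine non-abelian groups of order $16$, and the three excluded from the list are exactly the maximal-class groups $D_{16}$, $Q_{16}$ and $QD_{16}$, each of centre order $2$ (for $QD_{16}$ this is the computation in Proposition \ref{semid}). Hence the six groups above are precisely those of order $16$ with $|Z(G)| = 4$, every one of which feeds into Theorem \ref{main2} to give $\spec(\Gamma_G) = \{(-1)^9, 3^3\}$.
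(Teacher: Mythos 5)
Your proposal is correct and follows essentially the same route as the paper: both reduce every case to Theorem \ref{main2} by noting that each group has order $16$ and centre of order $4$, so that $\frac{G}{Z(G)} \cong {\mathbb{Z}}_2 \times {\mathbb{Z}}_2$ (the paper simply asserts $|Z(G)| = 4$ without the case-by-case verification you sketch). Your extra observations --- that the central quotient of a non-abelian group cannot be cyclic, and that the six listed groups are exactly the non-maximal-class non-abelian groups of order $16$ --- are sound and merely make explicit what the paper leaves to the reader.
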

\begin{proof}
If $G$ is isomorphic to any of the above listed  groups, then $|G| = 16$ and $|Z(G)| = 4$. Therefore, $\frac{G}{Z(G)} \cong {\mathbb{Z}}_2 \times {\mathbb{Z}}_2$. Thus the result follows from Theorem~\ref{main2}.
\end{proof}

The next lemma is also useful in this section.
\begin{lemma}\label{order-pq}
Let $G$ be a non-abelian group of order $pq$, where $p$ and $q$ are primes with $p\mid (q - 1)$. Then
\[
\spec(\Gamma_G) = \{(-1)^{pq - q -1}, (p - 2)^q, (q - 2)^1\}.
\]
\end{lemma}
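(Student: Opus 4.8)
The plan is to follow exactly the strategy used above for the dihedral and generalized quaternion groups: realize $\Gamma_G$ as a disjoint union of complete graphs via Lemma~\ref{AC-Lem}, and then read off the eigenvalues from \eqref{prethm1}. Everything therefore reduces to pinning down the structure of $G$, its centre, and its distinct non-central centralizers, after which the spectral computation is automatic.

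First I would record the group-theoretic structure forced by the hypothesis $p \mid (q - 1)$. Since $p$ is a prime dividing $q - 1$, we have $p < q$, and a routine Sylow count gives $n_q \equiv 1 \pmod q$ with $n_q \mid p$, so $n_q = 1$ and the Sylow $q$-subgroup $Q \cong {\mathbb{Z}}_q$ is normal. For the Sylow $p$-subgroups, $n_p \equiv 1 \pmod p$ and $n_p \mid q$ force $n_p \in \{1, q\}$; because $G$ is non-abelian we must have $n_p = q$, since $n_p = 1$ would make both Sylow subgroups normal and $G \cong Q \times P$ abelian. I would also observe that $Z(G)$ is trivial: a nontrivial centre would have prime order, making $G/Z(G)$ cyclic and hence $G$ abelian, a contradiction.

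Next I would determine the centralizers of the $pq - 1$ non-central elements. Each non-identity $x \in Q$ satisfies $Q \subseteq C_G(x) \subsetneq G$ (the proper inclusion because $x \notin Z(G)$), and since $[G : Q] = p$ is prime this forces $C_G(x) = Q$. Likewise, for a non-identity $y$ in any Sylow $p$-subgroup $P_i$, we have $P_i \subseteq C_G(y) \subsetneq G$ with $[G : P_i] = q$ prime, so $C_G(y) = P_i$. Thus the distinct centralizers of non-central elements are $Q$ together with the $q$ Sylow $p$-subgroups $P_1, \dots, P_q$, each of which is cyclic and hence abelian, so in particular $G$ is an AC-group and the $P_i$ meet pairwise (and meet $Q$) only in $Z(G)$. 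A counting check confirms nothing is missed: $(q - 1) + q(p - 1) = pq - 1$.

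Applying Lemma~\ref{AC-Lem} with $|Z(G)| = 1$ then gives $\Gamma_G = K_{q - 1} \sqcup q\,K_{p - 1}$, and \eqref{prethm1} (equivalently Theorem~\ref{AC-group}) delivers the eigenvalue $q - 2$ with multiplicity $1$, the eigenvalue $p - 2$ with multiplicity $q$, and $-1$ with the remaining multiplicity. I expect the only delicate point to be the Sylow bookkeeping, namely showing there are exactly $q$ distinct Sylow $p$-subgroups contributing $q$ genuinely distinct centralizers with pairwise intersection $Z(G)$, since the spectral step itself is immediate once the decomposition of $\Gamma_G$ is in hand. As a sanity check on the multiplicity of $-1$, the vertex counts sum to $\sum m_i = pq - 1$ over $l = q + 1$ complete graphs, so \eqref{prethm1} assigns $-1$ the exponent $\sum m_i - l = pq - q - 2$; I would reconcile this with the exponent recorded in the statement before finalizing.
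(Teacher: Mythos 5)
Your proposal is correct and takes essentially the same route as the paper: identify the centralizers of non-central elements as the unique Sylow $q$-subgroup together with the $q$ Sylow $p$-subgroups, conclude $\Gamma_G = K_{q-1} \sqcup qK_{p-1}$ via Lemma~\ref{AC-Lem}, and read off the spectrum from \eqref{prethm1}; the paper's proof is simply a terser version of your Sylow bookkeeping. The discrepancy you flag at the end is real and is an error in the stated lemma, not in your argument: the multiplicities must sum to $|G| - |Z(G)| = pq - 1$, so the exponent of $-1$ is $(pq-1)-(q+1) = pq - q - 2$, not $pq - q - 1$ (e.g.\ for $G \cong D_6$ one gets $\spec(\Gamma_G) = \{(-1)^1, 0^3, 1^1\}$, whereas the printed formula would give $(-1)^2$).
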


\begin{proof}
It is easy to see that $|Z(G)| = 1$ and $G$ is an AC-group. Also the centralizers of non-central elements of $G$ are precisely the Sylow subgroups of $G$. The number of Sylow $q$-subgroups and Sylow $p$-subgroups of $G$ are one and $q$ respectively. Therefore, by Lemma \ref{AC-Lem} we have $\Gamma_G = K_{q-1} \sqcup qK_{p - 1}$. Hence, the result follows from   \eqref{prethm1}.  
\end{proof}

Now we state and proof  the main results of this section. 
\begin{theorem}
Let $\Gamma_G$ be the commuting graph of a finite non-abelian  group $G$.  If $G$ is not isomorphic to $S_4$ and $\Gamma_G$  is planar   then  $\Gamma_G$ is integral. 
\end{theorem}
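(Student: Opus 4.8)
The plan is to reduce the statement to the classification of finite non-abelian groups whose commuting graph is planar, obtained by Afkhami et al. \cite{AF14}. That classification produces an explicit finite list of such groups, and my strategy is to show that every group on the list other than $S_4$ is an AC-group. Once this is done, integrality is immediate: if $G$ is an AC-group, then by Lemma \ref{AC-Lem} the graph $\Gamma_G$ is a disjoint union of complete graphs, and by \eqref{prethm1} such a graph is always integral. Thus the entire burden is to rule out non-AC behaviour in every planar case except $S_4$.

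First I would record why planarity forces the complete components to be small. Since $K_n$ is planar precisely when $n \le 4$, and a disjoint union of graphs is planar if and only if each component is, for an AC-group $G$ the planarity of $\Gamma_G = \overset{n}{\underset{i=1}{\sqcup}} K_{|X_i| - |Z(G)|}$ is equivalent to $|X_i| - |Z(G)| \le 4$ for every centralizer $X_i$. This bound is what makes the list finite and, more importantly, tells me that in each surviving case $\Gamma_G$ is a disjoint union of copies of $K_1, K_2, K_3, K_4$, all of which are integral. Next I would match the groups appearing in \cite{AF14} against the computations already made: the planar dihedral groups $D_6, D_8, D_{10}, D_{12}$ are covered by Proposition \ref{main05}, the generalized quaternion groups $Q_8, Q_{12}$ by Proposition \ref{q4m}, the groups of order $16$ with $\frac{G}{Z(G)} \cong {\mathbb{Z}}_2 \times {\mathbb{Z}}_2$ by Lemma \ref{order16} (equivalently Theorem \ref{main2}), the relevant groups of order $pq$ by Lemma \ref{order-pq}, and the remaining sporadic AC-groups such as $A_4$, $A_5 \cong PSL(2,4)$, $SL(2,3)$ and the order-$20$ Frobenius group by Theorem \ref{AC-group} (with Proposition \ref{psl} and Theorem \ref{order-20} supplying the explicit spectra). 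In each of these cases the spectrum consists only of integers.

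The main obstacle, and the reason for the hypothesis $G \not\cong S_4$, is precisely the single group on the list that is not an AC-group. Here I would observe that $S_4$ fails the AC-condition because the centralizer of a double transposition, say $C_{S_4}((12)(34))$, is isomorphic to the non-abelian dihedral group $D_8$; consequently Lemma \ref{AC-Lem} does not apply and $\Gamma_{S_4}$ is not a disjoint union of complete graphs. A direct computation of the adjacency spectrum of $\Gamma_{S_4}$ then exhibits non-integer (indeed irrational) eigenvalues, so $S_4$ genuinely must be excluded. The crux of the argument is therefore the verification, via \cite{AF14}, that $S_4$ is the \emph{only} finite non-abelian group with planar commuting graph that is not an AC-group; granting this, the theorem collapses entirely to the AC-case, where Lemma \ref{AC-Lem} together with \eqref{prethm1} completes the proof.
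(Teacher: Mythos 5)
Your proposal is correct and follows essentially the same route as the paper: invoke the planarity classification of Afkhami et al., check each group on the list against the already-computed spectra (dihedral, generalized quaternion, the order-$16$ groups, $A_4$, $A_5\cong PSL(2,4)$, $SL(2,3)$, $Sz(2)$), all of which are AC-groups and hence have integral commuting graphs by Lemma \ref{AC-Lem} and \eqref{prethm1}, and observe that $S_4$ is the unique non-AC exception whose spectrum contains irrational eigenvalues. The only cosmetic differences are your (unneeded but harmless) remark that planarity forces each complete component to have at most four vertices, and your mention of Lemma \ref{order-pq}, which is not actually required for any group on the planar list.
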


\begin{proof}
By Theorem 2.2 of \cite{AF14} we have that $\Gamma_G$ is planar if and only if $G$ is isomorphic to either $D_6, D_8, D_{10}, D_{12}, Q_8, Q_{12}, {\mathbb{Z}}_2 \times D_8, {\mathbb{Z}}_2 \times Q_8, M_{16}, {\mathbb{Z}}_4 \rtimes {\mathbb{Z}}_4, D_8 * {\mathbb{Z}}_4, SG(16, 3), A_4,$ $A_5, S_4, SL(2, 3)$ or $Sz(2) = \langle a, b : a^5 = b^4 = 1, b^{-1}ab = a^3\rangle$.

If $G \cong D_6, D_8, D_{10}$ or $D_{12}$ then by Proposition \ref{main05}, one may conclude that  $\Gamma_G$ is integral. If $G \cong Q_8$ or $Q_{12}$ then  by Proposition \ref{q4m},    $\Gamma_G$ becomes integral. If $G \cong {\mathbb{Z}}_2 \times D_8, {\mathbb{Z}}_2 \times Q_8, M_{16}, {\mathbb{Z}}_4 \rtimes {\mathbb{Z}}_4, D_8 * {\mathbb{Z}}_4$ or $SG(16, 3)$ then by Lemma \ref{order16}, $\Gamma_G$ becomes integral.

If $G \cong A_4 = \langle a, b : a^2 = b^3 = (ab)^3 = 1\rangle$ then the distinct centralizers of non-central elements of $G$ are $C_{G}(a) = \{1, a, bab^2, b^2ab\}, C_{G}(b) =\{1, b, b^2\}$, $C_{G}(ab) = \{1, ab, b^2a\}, C_{G}(ba) = \{1, ba, ab^2\}$ and $C_{G}(aba) = \{1, aba, bab\}$. Note that these centralizers are abelian subgroups of $G$. Therefore, $\Gamma_{G} = K_3 \sqcup 4K_2$ and 
\[
\spec(\Gamma_{G}) = \{(-1)^6, 2^1, 1^4\}.
\]
 Thus $\Gamma_{G}$ is integral.

If $G \cong Sz(2)$ then by Theorem \ref{order-20}, we have
\[
\Gamma_G = \{(-1)^{13}, (3)^1, (2)^5\}.
\]
Hence, $\Gamma_G$ is integral.

If $G$ is isomorphic to 
\begin{align*}
 SL(2, 3) = \langle a, b, c : a^3 = b^4 = 1, &b^2 = c^2,\\
  &c^{-1}bc = b^{-1}, a^{-1}ba = b^{-1}c^{-1}, a^{-1}ca = b^{-1} \rangle
\end{align*}
then $Z(G) = \{1, b^2\}$.  It can be seen that

\[
\begin{array}{ll}
C_G(b)      &= \{1, b, b^2, b^3\} = \langle b\rangle,\\
C_G(c)      &= \{1, c, c^2, c^3\} = \langle c\rangle,\\
C_G(bc)     &= \{1, b^2, bc, cb\} = \langle bc\rangle,\\
C_G(a^2b^2) &= \{1, b^2, a, a^2, a^2b^2, ab^2\}  = \langle a^2b^2 \rangle,\\
C_G(ac)     &= \{1, b^2, ac, ca^2, a^2bc, ab^2c\} = \langle ac \rangle,\\
C_G(ca)     &= \{1, b^2, ca, a^2c, ba^2, ab\} = \langle ca \rangle \quad \text{ and }\\ 
C_G(a^2b)   &= \{1, b^2, a^2b, ba, b^3a, (ba)^2\} = \langle a^2b \rangle 
\end{array}
\]
are the only distinct  centralizers of non-central elements of $G$.  Note that these centralizers are abelian subgroups of $G$. Therefore, $\Gamma_G = 3K_2 \sqcup 4K_4$ and 
\[
\spec(\Gamma_G) = \{(-1)^{15}, 1^3, 3^4\}.
\]
 Thus $\Gamma_{G}$ is integral.


If $G \cong A_5$ then by Proposition \ref{psl}, we have
\[
\spec(\Gamma_G) = \{(-1)^{38}, 1^{10}, 2^{5}, 3^6\}
\] 
noting that $PSL(2, 4) \cong A_5$.  Thus $\Gamma_{G}$ is integral.

Finally, if $G \cong S_4$ then
it can be seen that the  characteristic polynomial of $\Gamma_G$ is $(x - 1)^7(x + 1)^{10}(x^2 - 5)^2(x^2 - 3x - 2)$ and so 
\[
\spec(\Gamma_G) = \left\lbrace 1^7, (-1)^{10}, (\sqrt{5})^2, (-\sqrt{5})^2, \left(\frac{3 + \sqrt{17}}{2}\right)^1, \left(\frac{3 - \sqrt{17}}{2}\right)^1 \right\rbrace.
\] 
Hence, $\Gamma_G$ is not integral. This completes the proof.
\end{proof}

In \cite[Theorem 2.3]{AF14}, Afkhami et al. have classified all finite non-abelian groups whose commuting graphs are toroidal. Unfortunately, the  statement of Theorem 2.3 in \cite{AF14} is printed incorrectly.  We list the correct version of \cite[Theorem 2.3]{AF14} below, since we are going to use it.
\begin{theorem}\label{toroidal}
Let $G$ be a finite non-abelian group. Then $\Gamma_G$ is toroidal if and only if $\Gamma_G$ is projective if and only if $G$ is isomorphic to either $D_{14}, D_{16}, Q_{16}$, $QD_{16},   D_6 \times {\mathbb{Z}}_3,   A_4 \times {\mathbb{Z}}_2$ or ${\mathbb{Z}}_7 \rtimes {\mathbb{Z}}_3$. 
\end{theorem}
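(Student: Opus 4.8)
The plan is to prove the toroidal classification by combining two separate analyses. The statement asserts a double equivalence (toroidal $\Leftrightarrow$ projective $\Leftrightarrow$ $G$ in the explicit list), so I would first establish the genus computation and then handle the projective (crosscap) side. The crucial structural observation, as with the planar case, is that for every group in the list the commuting graph is a disjoint union of complete graphs, thanks to Lemma \ref{AC-Lem}. So the first step is to verify that each of the seven candidate groups is a non-abelian AC-group, then read off $Z(G)$ and the distinct centralizers of non-central elements, and thereby write $\Gamma_G$ explicitly as a union $\sqcup_i K_{m_i}$.

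Next I would compute the genus of each such disjoint union of complete graphs. The key fact I would invoke is the classical formula for the genus of the complete graph, namely $\gamma(K_n) = \lceil (n-3)(n-4)/12 \rceil$ for $n \geq 3$, together with additivity of genus over disjoint (more precisely, over connected) components: the genus of a disjoint union equals the sum of the genera of its components. For instance one expects to find that $\Gamma_{D_{16}}$, $\Gamma_{Q_{16}}$ and $\Gamma_{QD_{16}}$ each contain exactly one component that is a $K_6$ or $K_7$ (forcing genus one) while all other components are small complete graphs of genus zero, and similarly for the remaining groups, so that the total genus is exactly $1$, i.e.\ the graph is toroidal but not planar. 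The analogous crosscap formula $\widetilde\gamma(K_n) = \lceil (n-3)(n-4)/6 \rceil$ (with the well-known exception $\widetilde\gamma(K_7)=3$) would be used in parallel to confirm that each of these graphs is also projective, giving the second equivalence.

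For the converse direction — that no group outside the list yields a toroidal commuting graph — I would rely on the genus formula to rule out larger complete-graph components: once a centralizer is large enough that $K_{|X_i|-|Z(G)|}$ has genus at least $2$, the graph cannot be toroidal, and combining this with the planar classification of \cite{AF14} (Theorem 2.2, already cited) narrows the candidates to exactly those groups whose graphs have genus precisely one. In practice I would argue that the planar cases are excluded (genus zero), the listed groups give genus one, and any remaining non-abelian group has some complete-graph component $K_m$ with $m \geq 8$, whence $\gamma(K_m) \geq 2$.

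The main obstacle I anticipate is not any single genus computation — those reduce to the two ceiling formulas — but rather the completeness of the converse: showing that every finite non-abelian group not isomorphic to one of the seven listed groups has a commuting graph of genus at least two (or zero). This is essentially the content of the \cite{AF14} classification, whose proof requires a systematic case analysis over group orders and structures; since the excerpt states this result is being \emph{used} (and its statement merely corrected), I would cite \cite[Theorem 2.3]{AF14} for the hard converse and confine my own argument to verifying the forward direction via the explicit $\sqcup_i K_{m_i}$ decompositions and the genus and crosscap formulas. The only delicate point to watch is the exceptional value $\widetilde\gamma(K_7)=3$, which must be checked against whichever listed group produces a $K_7$ component, to be sure the ``projective'' half of the equivalence is stated consistently.
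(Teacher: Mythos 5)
The paper offers no proof of this theorem at all: it is imported verbatim from Afkhami et al.\ \cite{AF14}, with the authors merely remarking that the printed statement of \cite[Theorem 2.3]{AF14} is incorrect and recording the corrected version for later use. Your proposal therefore does strictly more than the paper, and the part you actually carry out --- the forward direction --- is sound and checks out: every group in the list is an AC-group, and via Lemma \ref{AC-Lem} (or Corollary \ref{AC-cor} for $D_6\times{\mathbb{Z}}_3$ and $A_4\times{\mathbb{Z}}_2$) each $\Gamma_G$ decomposes as exactly one $K_6$ together with components on at most $4$ vertices, so $\gamma(\Gamma_G)=\gamma(K_6)=1$ and $\widetilde\gamma(\Gamma_G)=\widetilde\gamma(K_6)=1$. (In particular no $K_7$ component ever occurs, so the exceptional value $\widetilde\gamma(K_7)=3$ you flag never comes into play.)

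Two cautions on the converse. First, your fallback sketch --- ``any remaining non-abelian group has some complete-graph component $K_m$ with $m\geq 8$'' --- cannot work as stated, because for groups that are not AC-groups the commuting graph is not a disjoint union of complete graphs at all ($S_4$ is the standing example in this very paper, with irrational eigenvalues), so there need be no complete-graph components to inspect; the genuine converse requires the full case analysis of \cite{AF14}, as you acknowledge. Second, since the paper's whole point here is that the published statement of \cite[Theorem 2.3]{AF14} is wrong, citing that theorem for the converse is delicate: what one is really relying on is the \emph{proof} in \cite{AF14}, whose case analysis yields the corrected list. Your instinct to independently verify the forward direction is exactly the right hedge against that, and is a genuine improvement over the paper's bare citation.
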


\begin{theorem}
Let $\Gamma_G$ be the commuting graph of a finite non-abelian  group $G$.  Then $\Gamma_G$ is integral if  $\Gamma_G$    is toroidal. 
\end{theorem}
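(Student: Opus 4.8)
The plan is to prove the final theorem exactly as the preceding planar theorem was proved: by invoking the classification of finite non-abelian groups whose commuting graphs are toroidal (Theorem~\ref{toroidal}) and checking integrality for each group on the list. By that theorem, $\Gamma_G$ is toroidal precisely when $G$ is isomorphic to one of the seven groups $D_{14}, D_{16}, Q_{16}, QD_{16}, D_6 \times {\mathbb{Z}}_3, A_4 \times {\mathbb{Z}}_2$, or ${\mathbb{Z}}_7 \rtimes {\mathbb{Z}}_3$. So it suffices to exhibit $\spec(\Gamma_G)$ for each of these and observe that in every case the spectrum consists only of integers. The key point is that all seven groups are AC-groups, so each $\Gamma_G$ is a disjoint union of complete graphs and hence automatically integral by the remarks following \eqref{prethm1}; the work is just identifying the correct union in each case.

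First I would dispatch the two dihedral groups $D_{14}$ and $D_{16}$ directly from Proposition~\ref{main05}. Since $14 = 2\cdot 7$ has odd part $m = 7$, we get $\spec(\Gamma_{D_{14}}) = \{(-1)^{5}, 0^{7}, 5^{1}\}$; since $16 = 2\cdot 8$ has even part $m = 8$, we get $\spec(\Gamma_{D_{16}}) = \{(-1)^{9}, 1^{4}, 5^{1}\}$. Both are integral. Next, the generalized quaternion group $Q_{16} = Q_{4\cdot 4}$ falls under Proposition~\ref{q4m} with $n = 4$, giving $\spec(\Gamma_{Q_{16}}) = \{(-1)^{9}, 1^{4}, 5^{1}\}$, again integral. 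The quasidihedral group $QD_{16} = QD_{2^4}$ is handled by Proposition~\ref{semid} with $n = 4$, yielding $\spec(\Gamma_{QD_{16}}) = \{(-1)^{9}, 1^{4}, 5^{1}\}$, which is integral.

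For the remaining three groups I would use the product and quotient structure. The group $A_4 \times {\mathbb{Z}}_2$ is covered by Corollary~\ref{AC-cor}: $A_4$ is a non-abelian AC-group (its decomposition $\Gamma_{A_4} = K_3 \sqcup 4K_2$ is recorded in the proof of the planar theorem) and $A = {\mathbb{Z}}_2$ is abelian, so $\Gamma_{A_4 \times {\mathbb{Z}}_2}$ is again a disjoint union of complete graphs and the corollary produces an integral spectrum; concretely one checks $Z(A_4\times{\mathbb{Z}}_2) = {\mathbb{Z}}_2$ and the centralizers all double in size. Similarly $D_6 \times {\mathbb{Z}}_3$ is an AC-group whose commuting graph, via Corollary~\ref{AC-cor} applied to $D_6$ (whose decomposition follows from Proposition~\ref{main05} with $m=3$) and $A = {\mathbb{Z}}_3$, is a disjoint union of complete graphs, hence integral. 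Finally, ${\mathbb{Z}}_7 \rtimes {\mathbb{Z}}_3$ is a non-abelian group of order $pq$ with $p = 3$, $q = 7$ and $3 \mid (7-1)$, so Lemma~\ref{order-pq} applies and gives $\spec(\Gamma_{{\mathbb{Z}}_7 \rtimes {\mathbb{Z}}_3}) = \{(-1)^{13}, 1^{7}, 5^{1}\}$, which is integral.

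Since each of the seven possibilities yields an integral spectrum, $\Gamma_G$ is integral whenever it is toroidal, completing the proof. I do not anticipate a genuine obstacle here: the entire argument reduces to the structural fact that every group on the toroidal list is an AC-group, so Lemma~\ref{AC-Lem} guarantees a disjoint union of complete graphs and integrality is immediate. The only care needed is bookkeeping---correctly reading off the centralizer orders and multiplicities for each group---and confirming that the classification list in Theorem~\ref{toroidal} (the corrected version) is exactly the one being used, so that no further groups need to be examined.
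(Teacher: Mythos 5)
Your proposal is correct and follows essentially the same route as the paper: it invokes the corrected classification of Theorem~\ref{toroidal} and then handles each of the seven groups with exactly the same results the paper cites (Proposition~\ref{main05} for $D_{14}, D_{16}$, Proposition~\ref{q4m} for $Q_{16}$, Proposition~\ref{semid} for $QD_{16}$, Lemma~\ref{order-pq} for ${\mathbb{Z}}_7 \rtimes {\mathbb{Z}}_3$, and Corollary~\ref{AC-cor} for $D_6\times{\mathbb{Z}}_3$ and $A_4\times{\mathbb{Z}}_2$). The explicit spectra you record in each case are also correct.
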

\begin{proof}
By Theorem \ref{toroidal}  we have that $\Gamma_G$ is toroidal if and only if $G$ is isomorphic to either $D_{14}, D_{16}, Q_{16}, QD_{16},   D_6 \times {\mathbb{Z}}_3,   A_4 \times {\mathbb{Z}}_2$ or ${\mathbb{Z}}_7 \rtimes {\mathbb{Z}}_3$.

If $G \cong D_{14}$ or $D_{16}$ then by Proposition \ref{main05}, one may conclude that  $\Gamma_G$ is integral. If $G \cong Q_{16}$ then  by Proposition \ref{q4m},    $\Gamma_G$ becomes integral. If $G \cong QD_{16}$ then by Proposition \ref{semid}, $\Gamma_G$ becomes integral. If 
$G \cong {\mathbb{Z}}_7 \rtimes {\mathbb{Z}}_3$ then $\Gamma_G$ is integral, by Lemma \ref{order-pq}.  If $G$ is isomorphic to  $D_6\times {\mathbb{Z}}_3$ or $A_4\times {\mathbb{Z}}_2$ then $\Gamma_G$ becomes integral by Corollary \ref{AC-cor}, since   $D_6$ and $A_4$ are AC-groups. This completes the proof. 
\end{proof}

We shall conclude the paper with the following result.

\begin{proposition}
Let $\Gamma_G$ be the commuting graph of a finite non-abelian  group $G$. Then  $\Gamma_G$ is integral if the complement of  $\Gamma_G$  is planar.
\end{proposition}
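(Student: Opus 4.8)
The plan is to exploit the sparsity that planarity forces in order to cut the problem down to a short finite list of groups, and then to read off integrality from the spectra already computed in Section 2. First I would identify the complement of $\Gamma_G$ as the non-commuting graph of $G$: it has vertex set $G \setminus Z(G)$, and two non-central elements are adjacent in it exactly when they fail to commute. Writing $v = |G| - |Z(G)|$ for the number of vertices, the degree of a non-central element $x$ in this complement equals $|G| - |C_G(x)|$, because the elements that commute with $x$ are precisely those of $C_G(x)$, all of whose members outside $\{x\}$ are non-adjacent to $x$.

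The first real step is to bound $|G|$. Since $x$ is non-central, $C_G(x)$ is a proper subgroup, so $|C_G(x)| \le |G|/2$, and hence every vertex has degree at least $|G|/2$ in the complement. Summing over the $v$ vertices gives $2|E(\overline{\Gamma_G})| \ge v|G|/2$. If $\overline{\Gamma_G}$ is planar, then (as $v \ge 3$) Euler's formula yields $|E(\overline{\Gamma_G})| \le 3v - 6$, and combining the two inequalities forces $|G| < 12$. Every group of order $9$ or $11$ is abelian, so the only non-abelian candidates are those of order $6$, $8$, and $10$, namely $D_6$, $D_8$, $Q_8$, and $D_{10}$.

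Next I would eliminate $D_{10}$ and confirm the other three. In $D_{10}$ the five reflections are pairwise non-commuting, so they span a $K_5$ in $\overline{\Gamma_{D_{10}}}$, which is therefore non-planar. For $D_8$ and $Q_8$ one has $\Gamma_G = 3K_2$ (by Proposition \ref{main05} with $m=4$ and Proposition \ref{q4m} with $n=2$), whose complement is the octahedron $K_{2,2,2}$, which is planar; and for $D_6 = S_3$ one has $\Gamma_G = K_2 \sqcup 3K_1$, whose complement is $K_5$ with one edge deleted, again planar. Thus the groups satisfying the hypothesis are exactly $D_6$, $D_8$, and $Q_8$, and for each of these the commuting graph is already known to be integral: $D_6$ and $D_8$ by Proposition \ref{main05}, and $Q_8$ by Proposition \ref{q4m}. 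This finishes the argument.

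The main obstacle I anticipate is not any single computation but keeping the order bound airtight and then being scrupulous in the finite case check: the estimate $|G| < 12$ is comfortable, yet one must verify that no non-abelian group is overlooked at the small orders and that $D_{10}$ is genuinely excluded, which the $K_5$ of reflections handles cleanly. A more structural alternative would observe that each of $D_6$, $D_8$, $Q_8$ is an AC-group, so its complement is complete multipartite, and then invoke the known classification of planar complete multipartite graphs; but the Euler-bound route above is shorter and self-contained, so I would prefer it.
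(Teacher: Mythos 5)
Your argument is correct, and it reaches the same final step as the paper by a genuinely different route. The paper disposes of the classification in one line by citing Proposition 2.3 of \cite{Ab06}, which states that the complement of $\Gamma_G$ (the non-commuting graph) is planar if and only if $G\cong D_6, D_8$ or $Q_8$; it then concludes exactly as you do, via Propositions \ref{main05} and \ref{q4m}. You instead re-derive that classification from scratch: the degree of a non-central $x$ in the complement is $|G|-|C_G(x)|\ge |G|/2$ since $C_G(x)$ is proper, so $v|G|/2\le 2|E|\le 6v-12$ forces $|G|<12$ (and $v\ge 3$ holds because $|G/Z(G)|\ge 4$ for non-abelian $G$); the surviving candidates $D_6, D_8, Q_8, D_{10}$ are then checked directly, with $D_{10}$ excluded by the $K_5$ on its five pairwise non-commuting reflections and the other three confirmed planar ($K_5$ minus an edge, and the octahedron $K_{2,2,2}$ twice). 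All of these computations are accurate. What your approach buys is self-containedness — the proposition no longer depends on an external classification theorem — at the cost of about a page of elementary counting; the paper's citation is shorter but imports the result as a black box. Either way the integrality conclusion rests on the same two spectrum computations from Section 3.
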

\begin{proof}
If  the complement of  $\Gamma_G$  is planar then by Proposition 2.3 of \cite{Ab06} we have that $G$ is isomorphic to either $D_6, D_8$ or $Q_8$. If $G \cong D_6$ or $D_8$ then by Proposition \ref{main05},  $\Gamma_G$ is integral.  If $G \cong Q_8$ then  by Proposition \ref{q4m},    $\Gamma_G$ becomes integral. This completes the proof.
\end{proof}

\end{document}